\let\chapter\@undefined\makeatother % Undefine \chapter for todonotes
\newcommand{\E}{\mathbb{E}}
\newcommand{\cP}{\mathcal{P}}
\newcommand{\R}{\mathbb{R}}
\renewcommand{\P}{\mathbb{P}}
\newcommand{\e}{\epsilon}
\newcommand{\C}{\mathcal{C}}
\renewcommand{\H}{\mathbb{H}}
\renewcommand{\v}{\nu}
\renewcommand{\L}{\mathcal{L}}
\theoremstyle{plain}
\newtheorem{Theorem}{Theorem}[section]
\newtheorem{prop}[Theorem]{Proposition}
\theoremstyle{definition}
\newtheorem{Definition}[Theorem]{Definition}
\newtheorem{Example}[Theorem]{Example}
\theoremstyle{remark}
\newtheorem{Remark}[Theorem]{Remark}
\numberwithin{equation}{section}
\begin{document}

\nobibliography*                                % Allow use of \bibentry command
\setlist{noitemsep}                             % Reduce space between list items 

% Define commands for title, acknowledgments and abstract. This saves
% typing later as you may want these to appear in different locations
% depending on whether paper is to be processed as a working paper, submission
% to journal, etc.

\title{Mean Field Game with Delay: a Toy Model}

\author{
Jean-Pierre Fouque
    \thanks{Department of Statistics \& Applied Probability, University of California, Santa Barbara, CA \ 93106-3110, e-mail: fouque@pstat.ucsb.edu. Work  supported by NSF grants DMS-1409434 and DMS-1814091.}
\and
Zhaoyu Zhang 
    \thanks{Department of Statistics \& Applied Probability, University of California, Santa Barbara, CA \ 93106-3110, e-mail: zhaoyu\_zhang@ucsb.edu}
}

% Printing time as well as date helps keep track of different revisions 
% during the day.
\date{\today}

%If you just want the date, but not the time
%\date{\today}
%\date{June 21, 2012}

\maketitle

\begin{abstract}
We study a toy model of linear-quadratic mean field game with delay. We ``lift" the delayed dynamic into an infinite dimensional space, and recast the mean field game system which is made of a forward Kolmogorov equation and a backward Hamilton-Jacobi-Bellman equation. We identify the corresponding master equation. A solution to this master equation is computed, and 
we show that it provides an approximation to a Nash equilibrium of the finite player game.
\end{abstract}

{\bf Keywords:} {inter-bank borrowing and lending, stochastic game with delay, Nash equilibrium, Master equation}

{\bf Mathematical Subject Classification (2000):} {91A15,  91G80, 60G99}

% Move to next page. If twoside option is selected above, next page will start 
% on right-hand side.
%\cleardoublepage

% Select spacing
\onehalfspacing

%\setcounter{page}{1}

% Print table of contents (if desired)
%\begin{toc}
%\begin{singlespacing}
%\tableofcontents
%\end{singlespacing}
%\clearpage
%\end{toc}

\section{Introduction}\label{sec: intro}
A linear quadratic stochastic game model of inter-bank borrowing and lending was proposed in \citep{Carmona_Fouque_Sun:2015}. In this model, each individual bank tries to minimize its costs by controlling its rate of borrowing or lending to a central bank with no obligation to pay back its loan. The finding is that, in  equilibrium, the central bank acts as a clearing house providing liquidity, and hence stability is enhanced. This model was extended in \citep{Carmona_Fouque_Mousavi_Sun:2018}, where a delay in the controls was introduced. The financial motivation is that banks are responsible for the past borrowing or lending, and need to make a repayment after a fixed time (the delay). In this model, the dynamics of the log-monetary reserves of the banks are described by stochastic delayed differential equations (SDDE). A closed-loop Nash equilibrium is identified by formulating the original SDDE in an infinite dimensional space formed by the state and the past of the control, and by solving the corresponding infinite dimensional Hamilton-Jacobi-Bellman (HJB) equation. For  general stochastic equations and control theory in infinite dimension,  we refer to \citep{Bensoussan_book:2007}, \citep{Fabbi_Gozzi_Swiech:2017}, and \citep{DaPrato_Zabczyk:2008}. 

In this paper, we study the  mean field game (MFG) corresponding to the model proposed in \citep{Carmona_Fouque_Mousavi_Sun:2018} as the number of banks goes to infinity. We identify the mean field game system, which is a system of coupled partial differential equations (PDEs). The forward Kolmogorov equation describes the dynamics of the joint law of current state and past control, and the backward HJB equation describes the evolution of the value function. Recently, J.-M. Lasry and P.-L. Lions introduced the concept of ``master equation" which contains all the information about the MFG. The  well-posedness of this master equation in presence of a common noise and convergence of the $N$-player system is analyzed in \citep{Cardali_Delarue_Lasry_Lions:2015} by a PDE approach. A probabilistic approach is proposed in 
\citep{Carmona_Delarue:2014} and \citep{Chassa_Crisan_Delarue:2015}. See also the two-volume book \citep{Carmona_Delarue_vol:2018} for a complete account of this approach. 

In this paper, the master equation for our delayed mean field game is derived, a solution is given explicitly, and we show that 
it is the limit of the closed-loop Nash equilibrium of 
the   $N$-player game system as $N\to\infty$.

The paper is organized as follows. In Section \ref{sec:gamedelay}, we briefly review the stochastic game model with delay presented in \citep{Carmona_Fouque_Mousavi_Sun:2018}. Then, in Section \ref{sec:MFG}, we construct the corresponding  mean field game system. In Section \ref{sec:mastereq}, we define derivatives with respect to probability measures in the space $\cP(\H)$ where $\H$ is the Hilbert space defined at the beginning of Section \ref{sec:construction}. In addition, we derive the master equation, and exhibit an explicit solution. Furthermore, in Section \ref{sec:convergence}, we show that this solution of the master equation is an approximation of order $1/N$ to the solution of the finite-player Nash system. Lastly, in Section \ref{sec:conclusion}, we compare the solution of the Nash system, the solution of the mean field game system, and the solution to the master equation.

\section{A differential game with delay}\label{sec:gamedelay}

\subsection{The model}

Let $\left(X_t^i, i =1, \cdots, N\right)$ represents the log-monetary reserves of the $N$ banks  at time t. At each time $t$, bank $i$ controls its rate of borrowing or lending $\alpha_t^i$, and it also needs to make a repayment after a fixed time $\tau$ such that $0 \leq \tau \leq T$, at a rate  denoted by $\alpha_{t-\tau}^i$. The dynamic of log-monetary reserves for each bank is given by 
\begin{equation}\label{org_system}
dX_t^i = (\alpha_t^i - \alpha_{t-\tau}^i) dt + \sigma dW_t^i,
\end{equation}
with deterministic initial conditions
\begin{equation}
X_0^i = \xi^i, \quad \mbox{ and } \alpha_s^i = \phi^i(s) \mbox{ for } s \in [-\tau, 0],
\end{equation}
where $W_t^i, \ i = 1, \dots, N$ are independent standard Brownian motions, and banks have the same volatility $\sigma > 0$.

Bank $i$ interacts with other banks by choosing its own strategy in order to minimize its cost functional $J^i(\alpha^i, \alpha^{-i})$, 
which involves the average of log-monetary reserves of all the other banks. The notation $\alpha^{-i}$ is a $(N-1)$ tuple of the $\alpha^j$ with $j \neq i$ and $j \in \{1, \cdots, N\}$, which
% := \{\alpha^1, \dots, \alpha^{i-1}, \alpha^{i+1}, \dots, \alpha^N\}$ 
represents all other banks' control except bank $i$. The cost functional for bank $i \in \{1, \dots, N\}$ is given by:
\begin{equation}
J^i(\alpha^i, \alpha^{-i}) = \E \left[\int_0^T f_i(X_t, \alpha_t^i) dt + g_i(X_T)\right],
\end{equation}
where the running and terminal cost functions $f$ and $g$ are:
\begin{equation}\label{cost}
\begin{split}
& f_i(x, \alpha^i) = \frac{1}{2}(\alpha^i)^2 + \frac{\e}{2}(\bar{x} - x^i)^2, \mbox{ with } \bar{x} := \frac{1}{N} \sum_{k=1}^N x^k, \mbox{ and } \e > 0,\\
& g_i(x) = \frac{c}{2}(\bar{x} - x^i)^2, \ c \geq 0.
\end{split}
\end{equation}

\subsection{Construction of a Nash equilibrium}\label{sec:construction}
In order to apply the dynamic programming principle to identify a closed-loop Nash equilibrium, we have to enlarge the state space by including the path of past controls, which lie in $\H:= L^2([-\tau, 0]; \R)$, the Hilbert space of square integrable real functions defined on $[-\tau,0]$, and write an infinite dimensional representation for our system. This evolution equation approach was initiated in \citep{Vinter_Kwong:1981} under a deterministic control setting, and later was generalized in \citep{Gozzi_Marinelli:2006} to a stochastic control problem. 

Given $z \in \R \times \H$, $z_0 \in \R$, and $z_1 \in \H$ will denote the two components of the product space $\R \times \H$. The inner product on $\R \times \H$ will be denoted by $\langle \cdot, \cdot \rangle$, and it is defined by
\begin{equation}
\langle z, \tilde{z} \rangle = z_0 \tilde{z}_0 + \int_{-\tau}^0 z_1(s) \tilde{z}_1(s) ds.
\end{equation}
Therefore, the new state is denoted by $Z^i_t = (Z^i_{0,t}, Z^i_{1,t}(s)), \ s \in [-\tau, 0]$, which corresponds to $(X_t^i, \alpha^i_{t-\tau-s})$ in the notation of the original system \eqref{org_system}. 

Bank $i$ tries to minimize its cost functional $J^i(\alpha^i, \alpha^{-i})$ defined by
\begin{equation}
J^i(t,z,\alpha^i, \alpha^{-i}) = \E \left[ \int_t^T f_i(Z_{0,s}, \alpha_s^i) ds + g_i(Z_{0,T}) | Z_t = z \right].
\end{equation}
After all other players $j \neq i$ have chosen their optimal strategies which minimize their cost functionals, player $i$'s value function $V^i(t,z)$ is defined by
\begin{equation*}
V^i(t, z) = \inf_{\alpha^i} J^i(t,z, \alpha^i, \alpha^{-i}).
\end{equation*}
By dynamic programming principle, the value function $V^i(t,z)$ must satisfy the following infinite dimensional HJB equation (see \citep{Fabbi_Gozzi_Swiech:2017} Chapter 2 for details):
\begin{multline}\label{hjb}
\partial_t V^i(t,z) + \frac{1}{2}Tr(G^* G \partial_{zz} V^i(t,z)) + \sum_{k=1}^N \langle Az^k, \partial_{z^k} V^i(t,z) \rangle \\ 
+ \inf_{\alpha^i} \left[\sum_{k=1}^N \langle B\alpha^k, \partial_{z^k} V^i(t,z) \rangle + f_i(z_0, \alpha^i) \right] = 0,
\end{multline}
with terminal condition $V^i(T,z) = \frac{c}{2}(\bar{z}_0 - z_0^i)^2$, where the operator $A: D(A) \subset \R \times \H \to \R \times \H$ is defined as 
\begin{equation*}
A: (z_0, z_1(s)) \to \left(z_1(0), -\frac{d z_1(s)}{ds} \right) \ a.e., \ s \in [-\tau, 0],
\end{equation*}
and its domain is 
$
D(A) = \{(z_0, z_1 (\cdot)) \in \R \times \H: z_1(\cdot) \in W^{1,2} ([-\tau, 0]; \R), \ z_1(-\tau) = 0\}
$.

The adjoint of $A$ is $A^*: D(A^*) \subset \R \times \H \to \R \times \H$ and is defined by
\begin{equation*}
A^*: (z_0, z_1(s)) \to \left(0, \frac{dz_1(s)}{ds} \right) \ a.e., \   s \in [-\tau, 0],
\end{equation*}
with domain
$
D(A^*) = \{(z_0, z_1(\cdot) ) \in \R \times \H: z_1(\cdot) \in W^{1,2}([-\tau, 0]; \R), \ z_0 = z_1(0)\}
$.

The operator $B: \R \to \R \times \H$ is defined by
\begin{equation*}
B: u \to (u, -\delta_{-\tau} (s) u), \ s \in [-\tau, 0],
\end{equation*}
where $\delta_{-\tau}(\cdot)$ is the Dirac measure at $-\tau$.

The adjoint of $B$ is $B^*: \R \times \H \to \R$ given by 
$$B^*: (z_0, z_1 (s)) \to z_0 - z_1 (-\tau).$$
The operator $G: \R^N \to \R^N \times \H^N$ is defined by
\begin{equation*}
 G: z_0 \to (\sigma z_0, 0).
\end{equation*} 

The infinite dimensional representation of the original system \eqref{org_system} is given by
\begin{equation}
\begin{aligned}
& dZ_t^i = (AZ_t^i + B \alpha_t^i) dt + G dW_t, \ 0 \leq t \leq T,\\
& Z_0^i = (\xi^i, \phi^i(s)) \in \H.\\
\end{aligned}
\end{equation}

By minimizing the Hamiltonian in \eqref{hjb}, the infimum can be computed, so that the optimal control is attained at
\begin{equation}
\hat{\alpha}^i = -\langle B, \partial_{z^i} V^i \rangle = - \left(\partial_{z_0^i} V^i - [\partial_{z_1^i} V^i](-\tau) \right).
\end{equation}
Assuming that each player follows its own optimal strategy $(\hat{\alpha}^i)_{1 \leq i \leq N}$, which forms a Nash equilibrium, the corresponding value function follows the HJB equation
\begin{eqnarray}
&\hskip -6cm \partial_t V^i + \frac{1}{2}Tr(G^* G \partial_{zz} V^i) + \sum_{k=1}^N \langle A z^k, \partial_{z^k} V^i \rangle \nonumber\\
&\hskip 2cm - \sum_{k \neq i} \left( B^* \partial_{z^k} V^i \right) \cdot \left(  B^*\partial_{z^k} V^k\right)
 - \frac{1}{2} (B^* \partial_{z^i} V^i)^2 + \frac{\e}{2} (\bar{z}_0 - z_0^i)^2 = 0.
\end{eqnarray}
After applying the definitions of the operators $A, B$ and $Q$, the HJB equation for player $i$ becomes:
\begin{multline}\label{hjbnplayer}
\partial_t V^i + \sum_{k=1}^N \frac{1}{2} \sigma^2 \partial_{z_0^k z_0^k} V^i + \sum_{k=1}^N \int_{-\tau}^0 z_{1}^k  \frac{d}{ds} (\partial_{z_{1}^k} V^i) ds \\
-\sum_{k\neq i}^N \left(\partial_{z_0^k} V^k - [\partial_{z_{1}^k} V^k] (-\tau) \right) \left(\partial_{z_0^k} V^i - [\partial_{z_{1}^k} V^i] (-\tau) \right)\\
- \frac{1}{2} \left( \partial_{z_0^i} V^i - [\partial_{z_{1}^i} V^i] (-\tau)\right)^2 + \frac{\e}{2}(\bar{z}_0 - z_0^i)^2 = 0.
\end{multline}

As shown in \citep{Carmona_Fouque_Mousavi_Sun:2018}, a solution of the system \eqref{hjbnplayer} can be found in the form
\begin{multline}\label{vi}
V^i(t,z) = E_0(t) (\bar{z}_0 - z_0^i)^2 - 2(\bar{z}_0 - z_0^i) \int_{-\tau}^0 E_1(t, -\tau - s) (\bar{z}_{1} - z_{1}^i) ds\\
+ \int_{-\tau}^0 \int_{-\tau}^0 E_2(t, -\tau - s, -\tau - r) (\bar{z}_{1} - z_{1}^i)(\bar{z}_{1} - z_{1}^i) ds dr + E_3(t), 
\end{multline}
for some deterministic functions $E_0(t)$, $E_1(t,s)$, $E_2(t,s,r)$, and $E_3(t)$ satisfying the following PDEs
\begin{equation}
\begin{split}\label{epde1}
& \frac{d E_0(t)}{dt} + 2 \left(\frac{1}{N^2}- 1\right) (E_0(t) + E_1(t, 0))^2 + \frac{\e}{2}= 0,\\
& \frac{\partial E_1(t, s)}{\partial t} - \frac{\partial E_1(t, s)}{\partial s} +2 \left(\frac{1}{N^2} - 1 \right) (E_0(t) + E_1(t,0)) (E_1(t, s) + E_2(t, s, 0))  = 0,\\
& \frac{\partial E_2(t, s, r)}{\partial t} - \frac{\partial E_2(t, s, r)}{s} - \frac{\partial E_2(t, s, r)}{r} \\
 & \hskip 3cm + 2 \left(\frac{1}{N^2}-1\right) (E_1(t, s) + E_2(t, s, 0))(E_1(t,r) + E_2(t,r,0)) = 0,\\
& \frac{dE_3(t)}{dt} + (1-\frac{1}{N})\sigma^2  E_0(t)  = 0,
\end{split}
\end{equation}
with boundary conditions: $\forall t \in [0,T]$ and $\forall s,r \in [-\tau, 0]$,
\begin{equation} 
\begin{split}\label{boundary}
& E_0(T) = \frac{c}{2}, \quad E_1(T,s) = 0, \quad E_2(T,s, r) = 0, \quad E_2(t,s,r) = E_2(t,r,s),\\
& E_1(t,-\tau) = -E_0(t), \quad E_2(t,s,-\tau) = -E_1(t,s), \quad E_3(T) = 0.\\
\end{split}
\end{equation}
This set of PDEs \eqref{epde1} with boundary conditions \eqref{boundary} admits a unique solution as shown in \citep{Vinter_Kwong:1981},
and the optimal strategies take the integral form
\begin{multline}
\hat{\alpha}_t^i = 2\left(1-\frac{1}{N} \right) \Bigg[\left( E_1(t,0) + E_0(t)\right) (\bar{z}_0 - z_0^i) \\
 - \int_{-\tau}^0 (E_2(t, -\tau-s, 0) + E_1(t, -\tau-s)) (\bar{z}_{1} - z_1^i) ds\Bigg].
\end{multline}

\section{The mean field game system}\label{sec:MFG}

The mean field game theory describes the structure of a game with infinite many indistinguishable players. All players are rational, i.e., each player tries to minimize their cost against the mass of other players. This assumption implies that the running cost and terminal cost in \eqref{cost} only depend on i-th player's state $z_0^i$ and the empirical distribution of  $(z_0^j)_{j \neq i}$. Denoting this empirical distribution by
$$\mu_0^i = \frac{1}{N-1} \sum_{j \neq i} \delta_{z_0^j},$$ 
 these costs, as in \eqref{cost}, can be re-written as
\begin{equation}
\begin{split}
& f_i(z_0, \alpha^i) 
= \frac{1}{2} (\alpha^i)^2 + \frac{\e}{2}(\bar{z}_0 - z_0^i)^2\\
& \quad \quad \quad 
= \frac{1}{2}(\alpha^i)^2 + \frac{\e}{2} \left( 1-\frac{1}{N}\right)^2 \left( \int_\R y_0 d \mu_0^i(y_0) - z_0^i \right)^2 := f(z_0^i, \mu_0^i, \alpha^i) ,\\
& g_i(z_0) = \frac{c}{2} \left( 1-\frac{1}{N}\right)^2 \left( \int_\R y_0 d \mu_0^i (y_0)- z_0^i \right)^2 := g(z_0^i, \mu_0^i) .
\end{split}
\end{equation}
%where $\mu_0^i = \frac{1}{N-1} \sum_{j \neq i} \delta_{z_0^j}$ 
%denotes the empirical distribution of all other players' state $z_0$.

As the number $N$ of players  goes to $\infty$, the joint empirical distribution of the states and past controls $Z^j_t=(Z^j_{0,t},Z^j_{1,t})$
\[
\nu^i_t:= \frac{1}{N-1} \sum_{j \neq i} \delta_{(Z^j_{0,t},Z^j_{1,t})},
\]
with marginals
\[
\mu_{0,t}^i = \frac{1}{N-1} \sum_{j \neq i} \delta_{Z_{0,t}^j},\, \mu_{1,t}^i = \frac{1}{N-1} \sum_{j \neq i} \delta_{Z_{1,t}^j},
\]
converges to a deterministic limit denoted by $\nu(t)$  (with marginals denoted by $\mu_0(t)$ and $\mu_1(t)$). Here, we assume that, at time $0$,  $\nu^i_0$ satisfies the LLN (for instance with i.i.d. $Z_0^j$), and that the propagation of chaos  property holds. A full justification of this property would involve generalizing the result in Section 2.1 of \citep{Carmona_Delarue:2014} to an infinite dimensional setting in order to take into account the past of the controls. This is highly technical but intuitively sound. A complete proof is beyond the scope of this paper.

In the limit, a single representative player tries to minimize his cost functional, and, dropping the index $i$,  his value function is defined as
\begin{equation}
V(t,z) = \inf_{(\alpha_s)_{t \leq s \leq T}} \E\left[\int_t^T f(s, Z_{0,s}, \mu_{0}(s), \alpha_s)ds + g(Z_{0,T}, \mu_{0}(T)) | Z_t = z\right],
\end{equation}
subject to
\begin{equation}\label{sde}
dZ_t = (AZ_t + B \alpha_t)dt + G dW_t.
%; \quad Z_0 = (\xi, (\phi(s))_{-\tau \leq s \leq 0} )
\end{equation}

The HJB equation for the value function $V(t, z)$ reads
\begin{multline}\label{infHJB}
\partial_t V + \frac{1}{2}Tr (G^* G \partial_{zz} V)+ \langle AZ, \partial_z V \rangle \\
+ \inf_{\alpha} \left\{ \langle B \alpha, \partial_z V \rangle + \frac{1}{2} \alpha^2 + \frac{\e}{2}\left(\int_{\R} y_0 d\mu_{0}(y_0) - z_0 \right)^2 \right\} = 0,
\end{multline}
with terminal condition 
$
V(T, z) = \frac{c}{2}(\int_{\R} y_0 d \mu_{0}(y_0) - z_0)^2.
$
Then, we minimize in $\alpha$ to get 
\begin{equation}\label{alphahat}
\hat{\alpha}_t = -\left(\partial_{z_0} V - [\partial_{z_1} V](-\tau) \right).
\end{equation} 
After plugging it into \eqref{infHJB}, our backward HJB equation reads:
\begin{equation}\label{HJB}
\begin{split}
& \partial_t V +  \frac{1}{2} \sigma^2 \partial_{z_0 z_0} V + \int_{-\tau}^0 z_{1}  \frac{d}{ds} (\partial_{z_{1}} V) ds 
-\frac{1}{2} \left( \partial_{z_0} V - [\partial_{z_{1}} V] (-\tau)\right)^2 \\
& \hskip 8cm
+ \frac{\e}{2}\left(\int_{\R} y_0 d \mu_{0}(y_0) - z_0 \right)^2 = 0,\\
& V(T, z) = \frac{c}{2}\left(\int_{\R} y_0 d \mu_{0}(y_0) - z_{0} \right)^2.
\end{split}
\end{equation}

Next, since we ``lift" the original non-Markovian optimization problem into a infinite dimensional Markovian control problem, we are able to characterize the corresponding generator for \eqref{sde}, which is denoted by $\L_t$, 
\begin{equation}
\L_t \varphi(z) = \langle (AZ + B \hat{\alpha}_t), \partial_z \varphi \rangle + \frac{1}{2} Tr (G^*G \partial_{zz} \varphi),
\end{equation}
where $\varphi$ is a smooth function and the time dependency comes from  $\hat{\alpha}_t$ given by \eqref{alphahat}.
The derivation of  the adjoint $\L_t^*$ of $\L_t$ is given in  Appendix \ref{appendix}. Consequently, the forward Kolmogorov equation for the distribution $\nu(t)$ reads
\begin{equation}\label{fp}
\begin{split}
& \partial_t \nu = \int_{-\tau}^0 \partial_{z_1} \left(\frac{d}{ds} z_1 \nu \right) ds -\int_{-\tau}^0 \partial_{z_1} (z_1 \nu)(\delta_0(s) - \delta_{-\tau}(s) ) ds 
+  \partial_{z_0} \{(\partial_{z_0} V - [\partial_{z_1}V](-\tau)) \nu\} \\
& \quad \quad - \int_{-\tau}^0 \partial_{z_1} \{(\partial_{z_0} V - [\partial_{z_1} V] (-\tau) ) \nu \} \delta_{-\tau}(s) ds + \frac{1}{2} \sigma^2 \partial_{z_0 z_0} \nu, \\
& \nu(0) = \P(\xi, \phi(s)_{s \in [-\tau, 0]}).
\end{split}
\end{equation}

Combining \eqref{HJB} with \eqref{fp}, we obtain the mean field game system. To solve this, We make the following ansatz for the value function
\begin{multline}\label{ansatzmfg}
V(t,z) = E_0(t) (m_0 - z_0)^2 - 2(m_0- z_0) \int_{-\tau}^0 E_1(t, -\tau - s)  (m_1 - z_{1}) ds\\
+ \int_{-\tau}^0 \int_{-\tau}^0 E_2(t, -\tau - s, -\tau - r) (m_1 - z_{1})(m_1- z_{1}) ds dr + E_3(t).
\end{multline}	
where we denote the mean of state $m_0 := \int_{\R} z_0 d\mu_0(z_0)$, and the mean of past control  $m_1 := \int_{\H} z_1 d \mu_1(z_1)$. Plugging \eqref{ansatzmfg} into \eqref{fp}, multiplying both sides of \eqref{fp} by $z_0$, and integrating over $\R \times \H$,  we have
\begin{equation}
\begin{aligned}
& \int_{\R \times \H} z_0 \partial_t \nu dz = \int_{\R \times \H} z_0 \int_{-\tau}^0 \partial_{z_1} \left( \frac{d}{ds} z_1 \nu \right) ds  dz
- \int_{\R \times \H} z_0 \int_{-\tau}^0 \partial_{z_1} (z_1 \nu) (\delta_0 (s) - \delta_{-\tau} (s)) ds  dz \\
& \quad +  \int_{\R \times \H} z_0  \partial_{z_0} \{(\partial_{z_0} V - [\partial_{z_1}V](-\tau)) \nu\} dz
- \int_{\R \times \H} z_0 \int_{-\tau}^0 \partial_{z_1} \{(\partial_{z_0} V - [\partial_{z_1} V] (-\tau) ) \nu \} \delta_{-\tau}(s) ds dz \\
& \quad + \int_{\R \times \H} z_0 \frac{1}{2} \sigma^2 \partial_{z_0 z_0} \nu dz.\\
\end{aligned}
\end{equation}
After integration by parts, we obtain
\begin{equation}
\partial_{t} m_{0} 
= \int_{\R \times \H} \left\{ \partial_{z_0} V - [\partial_{z_1}V](-\tau) \right\} \nu dz = 0,
\end{equation}
as can be seen directly using \eqref{ansatzmfg}. 

Similarly, plugging \eqref{ansatzmfg} to \eqref{fp}, multiplying both sides of \eqref{fp} by $z_1$, and integrating over $\R \times \H$,  we get
\begin{equation}
\begin{aligned}
& \int_{\R \times \H} z_1 \partial_t \nu dz = \int_{\R \times \H} z_1 \int_{-\tau}^0 \partial_{z_1} \left( \frac{d}{ds} z_1 \nu \right) ds  dz
- \int_{\R \times \H} z_1 \int_{-\tau}^0 \partial_{z_1} (z_1 \nu) (\delta_0 (s) - \delta_{-\tau} (s)) ds  dz \\
& \quad +  \int_{\R \times \H} z_1  \partial_{z_0} \{(\partial_{z_0} V - [\partial_{z_1}V](-\tau)) \nu\} dz
- \int_{\R \times \H} z_1 \int_{-\tau}^0 \partial_{z_1} \{(\partial_{z_0} V - [\partial_{z_1} V] (-\tau) ) \nu \} \delta_{-\tau}(s) ds dz \\
& \quad + \int_{\R \times \H} z_1 \frac{1}{2} \sigma \partial_{z_0 z_0}^2 \nu dz.\\
\end{aligned}
\end{equation}
By integration by parts, we deduce
\begin{equation}
\begin{aligned}
& \partial_{t} m_{1} = -\int_{\R \times \H} \int_{-\tau}^0 \frac{d}{ds} z_1 \nu ds dz + \int_{\R \times \H} \int_{-\tau}^0 z_1 \nu (\delta_0(s) - \delta_{-\tau}(s)) ds dz \\
 & \quad \quad \quad \quad \quad +  \int_{\R \times \H}  \left\{ \partial_{z_0} V - [\partial_{z_1} V] (-\tau) \right\} \nu  dz \\
 & \quad \quad \quad =  0.
\end{aligned}
\end{equation}

Now we are ready to verify the ansatz \eqref{ansatzmfg}. We first compute the derivative of the ansatz,
\begin{equation}
\begin{split}
& \partial_t V = \frac{d E_0(t)}{dt} (m_0 - z_0)^2 - 2(m_0 - z_0) \int_{-\tau}^0 \frac{\partial E_1 (t, -\tau -s )}{\partial t}  (m_{1} - z_{1}) ds \\
& \quad \quad + \int_{-\tau}^0 \int_{-\tau}^0 \frac{\partial E_2 (t, -\tau-s, -\tau - r)}{\partial t} (m_{1} - z_{1}) (m_{1} - z_{1}) dsdr + \frac{d E_3 (t)}{dt}, \\
& \partial_{z_0} V = -2 E_0(t) (m_0 - z_0) + 2 \int_{-\tau}^0 E_1(t, -\tau-s) (m_{1} - z_{1}) ds, \\
& \partial_{z_1} V = 2E_1(t, -\tau- s)(m_0 - z_0) - 2 \int_{-\tau}^0 E_2(t, -\tau-s,-\tau-r) (m_{1} - z_{1}) dr, \\
& \partial_{z_0 z_0} V = 2 E_0(t). 
\end{split}
\end{equation}
Then, we plug the ansatz \eqref{ansatzmfg} into \eqref{hjb}, and by collecting $(m_0 - z_0)^2$ terms, $(m_0 - z_0)(m_1 - z_1)$ terms, $(m_{1} - z_{1})^2$ terms, and constant terms, we obtain the following system of PDEs:
\begin{equation}\label{epde2}
\begin{aligned}
& \frac{d E_0(t)}{dt} -2 (E_0(t) + E_1(t, 0))^2 + \frac{\e}{2}= 0,\\
& \frac{\partial E_1(t, s)}{\partial t} - \frac{\partial E_1(t, s)}{\partial s} -2 (E_0(t) + E_1(t,0)) (E_1(t, s) + E_2(t, s, 0))  = 0, \\
& \frac{\partial E_2(t,s,r)}{\partial t} - \frac{\partial E_2(t, s, r)}{s} - \frac{\partial E_2(t,s, r)}{r}\\ 
 & \hskip 3cm -2 (E_1(t, s) + E_2(t, s, 0))(E_1(t,r) + E_2(t,r,0)) = 0, \\
& \frac{dE_3(t)}{dt} + 	\sigma^2  E_0(t)  = 0,
\end{aligned}
\end{equation}
with boundary conditions
\begin{equation}\label{boundary2}
\begin{split}
& E_0(T) = \frac{c}{2}, \quad E_1(T,s) = 0, \quad E_2(T,s, r) = 0, \quad E_2(t,s,r) = E_2(t,r,s),\\
& E_1(t,-\tau) = -E_0(t), \quad E_2(t,s,-\tau) = -E_1(t,s), \quad E_3(T) = 0.
\end{split}
\end{equation}
As for (\ref{epde1}--\ref{boundary}), the system (\ref{epde2}--\ref{boundary2})
admits a unique solution.

\section{The master equation}\label{sec:mastereq}

\subsection{Derivatives}
The master equation for this delayed game lies in an infinite dimensional space, and it requires a notion of derivatives in the space of measures in $\cP(\H)$.  

The set $\cP(\H)$ of probability measure on $\H$ is endowed with Monge-Kantorovich distance
\begin{equation}
{\textbf d}_{MK}(\mu_1, \mu_1') = \sup \left\{ \left \| \int_{\H} f(z) d(\mu_1 - \mu_1') (z) \right\|_{\H}: f \in Lip_1(\H)\right\},
\end{equation}
where $Lip(\H)$ is the collection of real-valued Lipschitz functions on $\H$ with Lipschitz constant 1.

\begin{Definition}
We say that $F: \cP(\H) \to \H$ is $\C^1$ if there exists an operator $\frac{\delta F}{\delta \v}: \cP (\H) \times \H \to \H$ such that for any $\mu_1$ and $\mu_1' \in \cP(\H)$
\begin{equation}
\lim_{\e \to 0^+} \frac{F(\mu_1 + \e (\mu_1' - \mu_1)) - F(\mu_1)}{\e} = \int_{\H} \frac{\delta F}{\delta \mu_1}(\mu_1, y_1) d (\mu_1' - \mu_1) (y_1).
\end{equation}

\end{Definition}

\begin{Definition}
If $\frac{\delta F}{\delta \mu_1} (\mu_1, y_1)$ is of class $\C^1$ with respect to $y_1$, the marginal derivative $D_{\mu_1} F: \cP(\H) \times \H \to \H$ is defined in the sense of Fr\'echet derivative:
\begin{equation}
D_{\mu_1} F(\mu_1, y_1) := D_{y_1}\frac{\delta F}{\delta \mu_1} (\mu_1, y_1).
\end{equation}
\end{Definition}

\begin{Remark}
Usually we will encounter a map $U:\cP(\H) \to \R$. In this case, $U$ can be expressed in a form of composition $\tilde{U}\circ F$, where $\tilde{U}: \H \to \R$, and $F: \cP(\H) \to \H$, i.e., $U = (\tilde{U} \circ F)(\mu_1)$. 

If $\frac{\delta F}{\delta \mu_1}$ is $\C^1$ with respect to $y_1$, and $\tilde{U}$ is Fr\'echet differentiable, then $\frac{\delta U}{\delta \mu_1}: \cP(\H) \times \H \to \H$, and $D_{\mu_1} U: \cP(\H) \times \H \to \H$ are defined by
\begin{equation}
\frac{\delta U}{\delta \mu_1}(\mu_1, y_1):= (D_F \tilde{U}) \left( \frac{\delta F}{\delta \mu_1} \right), \mbox{ and } D_{\mu_1} U(\mu_1, y_1) := \left( D_F \tilde{U} \right) \left( D_{\mu_1} F \right).
\end{equation}

\end{Remark}

\begin{Example}
Suppose $U(\mu_1) = \int_{-\tau}^0 \int_{\H} g(x_1(s)) d \mu_1 (x_1) ds$, where $g: \H \to \H$ is Fr\'echet differentiable. Then $U(\mu_1)$ can be written as $\tilde{U}[F(\mu_1)](s)$, where $\tilde{U}[F] = \int_{-\tau}^0 F(s) ds$, and $F(\mu_1) = \int_{\H} g(x_1(s)) d \mu_1 (x_1)$.
Then 
\begin{equation*}
F(\mu_1 + \e(\mu_1' - \mu_1)) = \int_{\H} g(x_1(s)) d (\mu_1 + \e (\mu_1' - \mu_1)).
\end{equation*}
So \begin{equation*}
 \frac{F(\mu_1 + \e (\mu_1' - \mu_1)) - F(\mu_1)}{\e} = \int_{\H} g(x_1(s)) d (\mu_1' - \mu_1).
 \end{equation*}
Then 
\begin{equation*}
\frac{\delta F}{\delta \mu_1}(\mu_1, y_1) = g(y_1), \quad \mbox{ and }
D_{\mu_1}F(\mu_1, y_1) = D_{y_1}g(y_1).
\end{equation*}
Since $D_F\tilde{U}[F] = 1$, we have 
\begin{equation*}
\frac{\delta U}{\delta \mu_1}(\mu_1, y_1) = g(y_1) \mbox{ and }D_{\mu_1} U(\mu_1, y_1) = D_{y_1}g(y_1).
\end{equation*}

\end{Example}

\subsection{The master equation}

\begin{Theorem}
For any $(t_0, \nu_0) \in [0,T] \times \cP(\R \times \H)$, we define 
\begin{equation}
U(t_0, \cdot, \nu_0) := V(t_0, \cdot),
\end{equation}
where $(V, \nu)$ is a classical solution to the system of forward-backward equations \eqref{HJB} and \eqref{fp}, with initial condition $\nu(t_0) = \nu_0$, and terminal condition $V(T,z) = \frac{c}{2} (\int_\R y_0 d\mu_{0}(y_0) - z_{0})^2$, respectively. Then $U $ must satisfy the following master equation 
%\begin{multline}\label{master}
%\partial_t U + \frac{1}{2} \sigma^2 \partial_{z_0 z_0} U + \frac{1}{2} \sigma^2 \int_\R \partial_{y_0}D_{\mu_0} U(t, z_0, z_1, \nu, y_0) d\mu_0 (y_0) 
%+ \int_{-\tau}^0 z_{1} \frac{d}{ds} \partial_{z_1} U ds\\
%+ \int_{-\tau}^0 \int_\H y_1 \frac{d}{ds} \left[ D_{\mu_1} U\right] d \mu_1 (y_1) ds 
%- \int_{\R \times \H} \left(\partial_{y_0} U - [\partial_{y_1} U] (-\tau)\right) D_{\mu_0} U d\nu(y)
%\\
% + \int_{\R \times \H} \left(\partial_{y_0} U -  [\partial_{y_1} U] (-\tau)\right) [D_{\mu_{1}} U](-\tau) d \nu(y)
% - \frac{1}{2}(\partial_{z_0} U - [\partial_{z_{1}} U](-\tau))^2 \\
% + \frac{\e}{2} \left(\int y_0 d\mu_0(y_0) - z_0 \right)^2 = 0
%\end{multline}
\begin{multline}\label{master}
\partial_t U (t, z_0, z_1, \nu)+ \frac{1}{2} \sigma^2 \partial_{z_0 z_0} U (t, z_0, z_1, \nu) + \frac{1}{2} \sigma^2 \int_\R \partial_{y_0}D_{\mu_0} U(t, z_0, z_1, \nu, y_0) d\mu_0 (y_0)\\ 
+ \int_{-\tau}^0 z_{1} \frac{d}{ds} \partial_{z_1} U (t, z_0, z_1, \nu) ds
+ \int_{-\tau}^0 \int_\H y_1 \frac{d}{ds} \left[ D_{\mu_1} U (t, z_0, z_1, \nu, y_1)\right](s) d \mu_1 (y_1) ds \\
- \int_{\R \times \H} \left(\partial_{y_0} U(t, y_0, y_1, \nu) - [\partial_{y_1} U (t, y_0, y_1, \nu)] (-\tau)\right) D_{\mu_0} U(t, z_0, z_1, \nu, y_0) d\nu(y)\\
 + \int_{\R \times \H} \left(\partial_{y_0} U (t, y_0, y_1, \nu) -  [\partial_{y_1} U (t, y_0, y_1, \nu)] (-\tau)\right) [D_{\mu_{1}} U (t, z_0, z_1, \nu, y_1)](-\tau) d \nu(y)\\
 - \frac{1}{2}(\partial_{z_0} U(t, z_0, z_1, \nu) - [\partial_{z_{1}} U (t, z_0, z_1, \nu)](-\tau))^2 
 + \frac{\e}{2} \left(\int_\R y_0 d\mu_0(y_0) - z_0 \right)^2 = 0,
\end{multline}
where $\mu_0 $ and $\mu_1$ are the marginal law for $Z_0$ and $Z_1$ respectively.

\end{Theorem}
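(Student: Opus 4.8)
The plan is to exploit the time-consistency (flow) property of the forward--backward system \eqref{HJB}--\eqref{fp} and then differentiate the resulting identity in time, reading off \eqref{master} term by term. Since $U$ is \emph{defined} through a classical solution, this is a verification computation rather than an existence statement: all the regularity needed (in particular the existence of $\frac{\delta U}{\delta\nu}$, $D_{\mu_0}U$ and $D_{\mu_1}U$) is part of the standing hypothesis.

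First I would establish the flow identity. Fix $(t_0,\nu_0)$ and let $(V,\nu)$ solve \eqref{HJB}--\eqref{fp} on $[t_0,T]$ with $\nu(t_0)=\nu_0$. For any $t\in[t_0,T]$ the restriction of $(V,\nu)$ to $[t,T]$ solves the same system on $[t,T]$ with initial law $\nu(t)$; by uniqueness of the solution to \eqref{HJB}--\eqref{fp} this restriction is precisely the solution defining $U(t,\cdot,\nu(t))$, so
\begin{equation}\label{flowid}
V(t,z)=U(t,z,\nu(t)),\qquad t_0\le t\le T.
\end{equation}

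Next I would differentiate \eqref{flowid} in $t$. Writing $y=(y_0,y_1)$ for the integration variable and using the chain rule along the measure flow together with $\partial_t\nu=\L_t^\ast\nu$ and the duality of Appendix \ref{appendix},
\begin{equation}
\partial_t V(t,z)=\partial_t U(t,z,\nu(t))+\int_{\R\times\H}\L_t\Big(\tfrac{\delta U}{\delta\nu}\Big)(y)\,d\nu(y).
\end{equation}
Because $\partial_y\frac{\delta U}{\delta\nu}=(D_{\mu_0}U,D_{\mu_1}U)$ by the definition of the marginal derivatives, I would expand $\L_t(\frac{\delta U}{\delta\nu})$ using the explicit operators. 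The diffusion $G:y_0\mapsto(\sigma y_0,0)$ contributes $\tfrac12\sigma^2\int_\R\partial_{y_0}D_{\mu_0}U\,d\mu_0$; the transport part $A(y_0,y_1)=(y_1(0),-\tfrac{d}{ds}y_1)$, after integration by parts in $s$ with the boundary bookkeeping used in deriving \eqref{fp}, produces $\int_{-\tau}^0\int_\H y_1\tfrac{d}{ds}[D_{\mu_1}U](s)\,d\mu_1\,ds$; and, with $\hat\alpha_t(y)=-(\partial_{y_0}U-[\partial_{y_1}U](-\tau))$ and $Bu=(u,-\delta_{-\tau}u)$,
\begin{equation}
\big\langle B\hat\alpha_t(y),(D_{\mu_0}U,D_{\mu_1}U)\big\rangle=-\big(\partial_{y_0}U-[\partial_{y_1}U](-\tau)\big)\big(D_{\mu_0}U-[D_{\mu_1}U](-\tau)\big),
\end{equation}
whose $\nu$-integral splits exactly into the two coupling terms of \eqref{master}. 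Substituting the HJB equation \eqref{HJB} for $\partial_t V$ (with $V(t,\cdot)=U(t,\cdot,\nu(t))$) reproduces the remaining local terms $\tfrac12\sigma^2\partial_{z_0z_0}U$, $\int_{-\tau}^0 z_1\tfrac{d}{ds}\partial_{z_1}U\,ds$, $-\tfrac12(\partial_{z_0}U-[\partial_{z_1}U](-\tau))^2$ and $\tfrac\e2(\int_\R y_0\,d\mu_0-z_0)^2$ unchanged. Evaluating at $t=t_0$ and recalling that $(t_0,\nu_0)$ is arbitrary yields \eqref{master} for every $(t,z_0,z_1,\nu)$.

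The main obstacle is the chain-rule and integration-by-parts step. Justifying differentiation of $U$ along the Kolmogorov flow in this infinite-dimensional product space is delicate, and the genuinely technical point is the integration by parts for the unbounded transport operator $A$: its $\tfrac{d}{ds}$ part interacts with the Dirac masses carried by $B$, so the boundary contributions at $s=-\tau$ and $s=0$ must be tracked carefully --- these are exactly the terms that survive as the evaluations $[\partial_{y_1}U](-\tau)$ and $[D_{\mu_1}U](-\tau)$ in \eqref{master}. Everything else reduces to matching coefficients against the HJB equation.
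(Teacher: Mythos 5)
Your proposal is correct and follows essentially the same route as the paper: the flow identity $V(t,z)=U(t,z,\nu(t))$, the chain rule in $t$ with the Fokker--Planck equation substituted for $\partial_t\nu$ (your generator-duality phrasing $\int \L_t\bigl(\tfrac{\delta U}{\delta\nu}\bigr)\,d\nu$ is exactly the paper's integration by parts, i.e.\ the adjoint computation of Appendix~\ref{appendix}), and finally substitution of the HJB equation for $\partial_t V$ to read off \eqref{master}.
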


\begin{proof}
%From the definition of partial derivative, we have
%\begin{align*}
%\partial_t U(t_0, z, \nu_0) & = \lim_{h \to 0} \frac{U(t_0 + h, z, \nu_0) - U(t_0, z, \nu_0)}{h}\\
%& = \lim_{h \to 0} \frac{U(t_0 + h, z, \nu_0) - U(t_0 + h, z, \nu (t_0 + h))}{h} + \frac{U(t_0 + h, z, \nu (t_0 + h)) - U(t_0, z, \nu_0)}{h}
%\end{align*}
%
%\begin{align*}
%& U(t_0 + h, z, \nv(t_0 + h)) - U(t_0+ h, z, \nu_0)\\
%= & \int_0^1 \int_\H \int_{t_0}^{t_0 +H} \frac{\delta U}{ \delta \nu} (\nu(t_0 + h, y) - \nu(t_0, y) ) dy ds\\
%\end{align*}

For any $h \in [0, T-t_0]$, $V(t_0 + h, \cdot) = U(t_0 + h, \cdot, \nu(t_0 + h))$. Then
\begin{equation}\label{a}
\begin{aligned}
& \partial_t V(t_0, z) \\
= & \partial_t U(t_0, z, \nu_0) + \int_{\R \times \H} \frac{\delta U}{\delta \nu} (t_0, z, \nu, y) \partial_t \nu(t_0, y) dy\\
= & \partial_t U(t_0, z, \nu_0) + \int_{\R \times \H} \frac{\delta U}{\delta \nu} (t_0, z, \nu, y) 
\left(\int_{-\tau}^0 \partial_{y_1} \left(\frac{d}{ds} y_1 \nu \right) ds -\int_{-\tau}^0 \partial_{y_1} (y_1 \nu)(\delta_0(s) - \delta_{-\tau}(s) ) ds \right. \\
& \left. +  \partial_{y_0} \left\{(\partial_{y_0} U - [\partial_{y_1} U](-\tau)) \nu \right\} 
 - \int_{-\tau}^0 \partial_{y_1} \left\{(\partial_{y_0} U - [\partial_{y_1} U] (-\tau)) \nu \right\} \delta_{-\tau}(s) ds + \frac{1}{2} \sigma^2 \partial_{y_0 y_0} \nu \right) dy\\
= & \partial_t U(t_0, z, \nu_0) - \int_{-\tau}^0 \int_{\R \times \H} D_{\mu_1}U(t_0, z, \nu, y) \frac{d}{ds}y_1 \nu dy ds \\
& + \int_{-\tau}^0 \int_{\R \times \H} D_{\mu_1}U y_1 \nu(\delta_0(s) - \delta_{-\tau}(s)) dy ds
 - \int_{\R \times \H} D_{\mu_0} U(\partial_{y_0} U - [\partial_{y_1} U] (-\tau)) \nu dy\\
& + \int_{-\tau}^0 \int_{\R \times \H} D_{\mu_1} U(\partial_{y_0} U - [\partial_{y_1} U](-\tau) \nu) \delta_{-\tau}(s) dy ds 
+ \int_{\R \times \H} \frac{1}{2} \sigma^2 \partial_{y_0} D_{\mu_0} U \nu dy\\
= & \partial_t U(t_0, z, \nu_0)  + \int_{-\tau}^0  \int_{\R \times \H} y_1 \frac{d}{ds} D_{\mu_1} U \nu dy ds - \int_{\R \times \H} D_{\mu_0}U  (\partial_{y_0} U - [\partial_{y_1} U](-\tau)) \nu dy \\
& +  \int_{\R \times \H} [D_{\mu_1} U] (-\tau) ((\partial_{y_0} U - [\partial_{y_1} U](-\tau)) \nu dy + \frac{1}{2} \sigma^2 \int_{\R \times \H} \partial_{y_0} D_{\mu_0} U \nu dy.
\end{aligned}
\end{equation}
On the other hand, $V$ satisfies the HJB \eqref{hjb} equation. 
\begin{equation}\label{b}
\begin{aligned}
& \partial_t V \\
= & - \frac{1}{2} \sigma^2 \partial_{z_0 z_0} V - \int_{-\tau}^0 z_{1}  \frac{d}{ds} (\partial_{z_{1}} V) ds 
+ \frac{1}{2} \left( \partial_{z_0} V - [\partial_{z_{1}} V] (-\tau)\right)^2 - \frac{\e}{2}\left(\int_{\R} y_0 d \mu_0(y_0) - z_0 \right)^2 \\
= & - \frac{1}{2} \sigma^2 \partial_{z_0 z_0} U - \int_{-\tau}^0 z_{1}  \frac{d}{ds} (\partial_{z_{1}} U) ds 
+ \frac{1}{2} \left( \partial_{z_0} U - [\partial_{z_{1}} U] (-\tau)\right)^2 - \frac{\e}{2}\left(\int_{\R} y_0 d \mu_0(y_0) - z_0 \right)^2. \\
\end{aligned}
\end{equation}
Therefore, subtracting \eqref{b} from \eqref{a}, we have shown that $U$ satisfies the master equation \eqref{master}.
\end{proof}

\subsection{Explicit solution of the master equation}
It turns out that this master equation \eqref{master} can be solved explicitly by making the following ansatz, and  we also define $m_0 := \int_{\R} y_0 d\mu_{0}(y_0)$ and $m_{1} := \int_\H y_{1} d \mu_{1}(y_1)$ for convenience.
\begin{multline}\label{solution}
U(t,z_0, z_1, \nu) = E_0(t) (m_0 - z_0)^2 - 2(m_0 - z_0) \int_{-\tau}^0 E_1(t, -\tau - s) (m_{1} - z_{1}) ds\\
+ \int_{-\tau}^0 \int_{-\tau}^0 E_2(t, -\tau-s, -\tau-r) (m_{1} - z_{1})(m_{1} - z_{1}) ds dr + E_3(t).
\end{multline}
Then, we compute the partial derivatives needed in \eqref{master} explicitly, we have

\begin{equation}
\begin{split}
& \partial_t U 
=  \frac{d E_0(t)}{dt} (m_0 - z_0)^2 
- 2 (m_0 - z_0) \int_{-\tau}^0 \frac{\partial E_1(t, -\tau-s)}{\partial t} (m_{1} - z_{1}) ds \\
& \quad \quad \quad + \int_{-\tau}^0 \int_{-\tau}^0 \frac{\partial E_2(t, -\tau - s, -\tau - r)}{\partial t} (m_{1} - z_{1}) (m_{1}- z_{1}) dsdr + \frac{dE_3(t)}{dt},\\
& \partial_{z_0} U
=  -2 E_0(t) (m_0 - z_0) + 2  \int_{-\tau}^0 E_1(t, -\tau - s) (m_{1} - z_{1}) ds, \\
& \partial_{z_1} U
=  2  E_1(t, -\tau - s) (m_0 - z_0) - 2  \int_{-\tau}^0 E_2(t, -\tau - s, -\tau - r) (m_{1} - z_{1}) dr, \\
& D_{\mu_0} U
=  2 E_0(t) (m_0 - z_0) - 2  \int_{-\tau}^0 E_1(t, -\tau - s) (m_{1} - z_{1}) ds,\\
& D_{\mu_1} U
=  -2  E_1(t, -\tau - s) (m_0 - z_0) + 2  \int_{-\tau}^0 E_2(t, -\tau - s, -\tau - r) (m_{1} - z_{1}) dr,\\
& \partial_{z_0 z_0} U
= 2  E_0(t),
\end{split}
\end{equation}
and plug those into our master equation \eqref{master}. We have 
\begin{align*}
& \frac{d E_0(t)}{dt} (m_0 - z_0)^2 
- 2 (m_0 - z_0) \int_{-\tau}^0 \frac{\partial E_1(t, -\tau - s)}{\partial t} (m_{1} - z_{1}) ds \\
& \quad +  \int_{-\tau}^0 \int_{-\tau}^0 \frac{\partial E_2(t, -\tau - s, -\tau - r)}{\partial t} (m_{1} - z_{1}) ( m_{1}- z_{1}) dsdr + \frac{dE_3(t)}{dt}
+ \frac{1}{2} \sigma^2 (2  E_0(t))\\
& \quad - \int_{-\tau}^0 (m_{1} - z_{1}) \left( 2  \frac{\partial E_1(t, -\tau - s)}{\partial s} (m_0 - z_0) - 2  \int_{-\tau}^0 \frac{\partial E_2(t, -\tau - s, -\tau - r)}{\partial s} (m_{1} - z_{1}) dr\right) ds\\
& \quad - 2 \left((E_0(t) + E_1(t, 0)) (m_0 - z_0)  -  \int_{-\tau}^0 (E_1(t, -\tau - s) + E_2(t, -\tau - s, 0) )(m_{1} - z_{1}) ds \right)^2\\
& \quad + \frac{\e}{2}(m_0 - z_0)^2 = 0.\\
\end{align*}

Collecting $(m_0 - z_0)^2$ terms, $(m_0 - z_0)(m_1 - z_1)$ terms, $(m_{1} - z_{1})^2$ terms, and constant terms, we obtain 
that the function $E_i, i=0,\cdots,3$, satisfy the system of PDEs (\ref{epde2}) with boundary conditions (\ref{boundary2}).
%
%
%the following set of PDEs:
%\begin{equation}\label{epde3}
%\begin{aligned}
%& \frac{d E_0(t)}{ dt} - 2 (E_0(t) + E_1(t,0))^2 + \frac{\e}{2} = 0,\\
%& \frac{\partial E_1(t, s)}{\partial t}  - \frac{\partial E_1(t, s)}{\partial s} - 2(E_0(t) + E_1(t,0))(E_1(t, s) + E_2(t, 0, r)) = 0,\\
%&\frac{\partial E_2(t, s,r)}{\partial t} - \frac{\partial E_2(t, s, r)}{\partial s} - \frac{\partial E_2(t, s, r)}{\partial r}\\ 
% & \hskip 3cm -2 (E_1(t, s) + E_2(t, s, 0))(E_1(t,r) + E_2(t,r,0)) = 0, \\
%& \frac{dE_3(t)}{dt} + E_0(t) \sigma^2 = 0,
%\end{aligned}
%\end{equation}
%with boundary conditions
%\begin{equation}
%\begin{split}
%& E_0(T) = \frac{c}{2}, \quad E_1(T,s) = 0, \quad E_2(T,s, r) = 0, \quad E_2(t,s,r) = E_2(t,r,s),\\
%& E_1(t,-\tau) = -E_0(t), \quad E_2(t,s,-\tau) = -E_1(t,s), \quad E_3(T) = 0.
%\end{split}
%\end{equation}

\section{Convergence of the Nash system}\label{sec:convergence}
From the previous section, we have seen that our master equation is well posed, and we obtained an explicit solution. 
Furthermore, it also describes the limit of Nash equilibria of the $N$-player games as $N\to\infty$. In this section, generalizing  to the case with delay the results of 
\citep{Cardali_Delarue_Lasry_Lions:2015} (see also \citep{kolo}),
we show that the solution of the Nash system \eqref{hjbnplayer} converges to the solution of the master equation \eqref{master} as number of players $N \to +\infty$, with a $1/N$ Cesaro convergence rate.

In Section \ref{sec:mastereq}, we find that \eqref{solution} is a solution to the master equation \eqref{master}.  We set $u^i(t, z_0, z_1) := U(t, z_0^i, z_{1}^i, \nu^i)$, where $\nu^i = \frac{1}{N-1} \sum_{k \neq i} \delta_{(z_0^k, z_{1}^k)}$, denotes the joint empirical measure of $z_0$ and $z_{1}$.  The empirical measure of $z_0$ is given by $\mu_{0}^i = \frac{1}{N-1} \sum_{k \neq i} \delta_{z_0^k}$, and the empirical measure of $z_{1}$ is given by $\mu_{1}^i = \frac{1}{N-1} \sum_{k \neq i} \delta_{z_{1}^k}$. Note that, by direct computation, for $k \neq i$, and any $N \geq 2$,
\begin{equation}\label{uU}
\begin{split}
& \partial_{z_0^k} u^i(t, z_0, z_1) = \frac{1}{N-1} D_{\mu_0^i}  U(t, z_0^i, z_{1}^i, \nu^i, z_0^k),\\
& \partial_{z_1^k} u^i(t, z_0, z_1) = \frac{1}{N-1} D_{\mu_1^i}  U(t, z_0^i, z_{1}^i, \nu^i, z_1^k),\\
& \partial_{z_0^k z_0^k} u^i(t, z_0, z_1) = \frac{1}{N-1} \partial_{z_0^k} [D_{\mu_0^i} U] (t, z_0^i, z_1^i, \nu^i, z_0^k) 
+ \frac{1}{(N-1)^2}D_{\mu_0^i \mu_0^i} U(t, z_0^i, z_1^i, \nu^i, z_0^k, z_0^k).
\end{split}
\end{equation}

\begin{prop}
For any $i \in \{1, \cdots, N\}$, $u^{i}(t, z_0, z_1)$ satisfies
\begin{multline}\label{u}
 \partial_t u^i + \sum_{k=1}^N \frac{1}{2} \sigma^2 \partial_{z_0^k z_0^k} u^i + \sum_{k=1}^N \int_{-\tau}^0 z_{1}^k  \frac{d}{ds} (\partial_{z_{1}^k} u^i) ds 
-\sum_{k\neq i}^N \left(\partial_{z_0^k} u^k - [\partial_{z_{1}^k} u^k](-\tau) \right) \left(\partial_{z_0^k} u^i - [\partial_{z_{1}^k} u^i](-\tau) \right) \\
 - \frac{1}{2} \left( \partial_{z_0^i} u^i - [\partial_{z_{1}^i} u^i] (-\tau)\right)^2 + \frac{\e}{2}(\bar{z}_0 - z_0^i)^2 + e^i(t,z) = 0,
\end{multline}
where $\|e^i(t,z)\| < \frac{C}{N}$, with terminal condition $u^i(T,z) = \frac{c}{2}(\bar{z}_0 - z_0^i)^2$.

This shows that $(u^i)_{i \in \{1, \dots, N\}}$ is ``almost'' a solution to the Nash system \eqref{hjbnplayer}.
\end{prop}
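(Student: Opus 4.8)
The plan is to substitute the ansatz $u^i(t,z) = U(t,z_0^i,z_1^i,\nu^i)$, with $\nu^i = \frac{1}{N-1}\sum_{k\neq i}\delta_{(z_0^k,z_1^k)}$, directly into the left-hand side of \eqref{u}, and to show that the resulting expression is precisely the master equation \eqref{master} evaluated at the point $(t,z_0^i,z_1^i,\nu^i)$ --- which vanishes because $U$ solves \eqref{master} --- plus a remainder that is $O(1/N)$. The computation is organized by splitting every sum $\sum_{k=1}^N$ into the diagonal term $k=i$ and the off-diagonal terms $k\neq i$, and by replacing the off-diagonal derivatives of $u^i$ with measure derivatives of $U$ through the identities \eqref{uU}.

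First I would treat the diagonal ($k=i$) contributions. Since $\nu^i$ does not involve the variables $z_0^i$ or $z_1^i$, the chain rule gives $\partial_t u^i = \partial_t U$, $\partial_{z_0^i}u^i = \partial_{z_0}U$, $\partial_{z_1^i}u^i = \partial_{z_1}U$ and $\partial_{z_0^iz_0^i}u^i = \partial_{z_0z_0}U$, all evaluated at $(t,z_0^i,z_1^i,\nu^i)$. These reproduce, term by term and exactly, the four ``local'' terms of \eqref{master}: $\partial_t U$, $\frac12\sigma^2\partial_{z_0z_0}U$, the transport term $\int_{-\tau}^0 z_1\frac{d}{ds}\partial_{z_1}U\,ds$, and the quadratic term $-\frac12(\partial_{z_0}U - [\partial_{z_1}U](-\tau))^2$.

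Next I would handle the off-diagonal sums using \eqref{uU}. The second-order sum $\frac12\sigma^2\sum_{k\neq i}\partial_{z_0^kz_0^k}u^i$ produces a first-order piece $\frac12\sigma^2\cdot\frac{1}{N-1}\sum_{k\neq i}\partial_{y_0}D_{\mu_0}U = \frac12\sigma^2\int_\R \partial_{y_0}D_{\mu_0}U\,d\mu_0$, which matches \eqref{master} exactly because $\mu_0 = \mu_0^i$ is empirical, together with a genuinely second-order piece $\frac{\sigma^2}{2(N-1)}\int_\R D_{\mu_0\mu_0}U\,d\mu_0$ that is $O(1/N)$ and is placed into $e^i$. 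The transport sum $\sum_{k\neq i}\int_{-\tau}^0 z_1^k\frac{d}{ds}(\partial_{z_1^k}u^i)\,ds$ collapses, again using that $\mu_1^i$ is empirical, to $\int_{-\tau}^0\int_\H y_1\frac{d}{ds}[D_{\mu_1}U](s)\,d\mu_1\,ds$, matching \eqref{master} exactly. The coupling sum is the delicate one: writing the first factor through $u^k$ gives $(\partial_{z_0}U - [\partial_{z_1}U](-\tau))(t,z_0^k,z_1^k,\nu^k)$ and the second factor through \eqref{uU} gives $\frac{1}{N-1}(D_{\mu_0}U(\cdots,z_0^k) - [D_{\mu_1}U(\cdots,z_1^k)](-\tau))$ evaluated at $(t,z_0^i,z_1^i,\nu^i)$, so that the $\frac{1}{N-1}$-weighted sum reproduces the two $d\nu$-integrals of \eqref{master} --- except that the first factor is evaluated at the ``wrong'' measure $\nu^k$ rather than $\nu^i$. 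Finally the running cost $\frac\e2(\bar z_0 - z_0^i)^2$ must be compared with the master term $\frac\e2(m_0-z_0^i)^2$, $m_0 = \int_\R y_0\,d\mu_0^i$, using the exact identity $\bar z_0 - z_0^i = (1-\tfrac1N)(m_0 - z_0^i)$, and likewise for the terminal data.

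The main obstacle, and the only place where the $1/N$ rate really has to be earned, is the measure mismatch in the coupling term. The two empirical measures differ by a single atom, $\nu^k - \nu^i = \frac{1}{N-1}(\delta_{(z_0^i,z_1^i)} - \delta_{(z_0^k,z_1^k)})$, so $\mathbf{d}_{MK}(\nu^i,\nu^k) = O(1/N)$; equivalently, in terms of the means that actually enter the explicit solution \eqref{solution}, $m_0^k - m_0^i = \frac{1}{N-1}(z_0^i - z_0^k)$ and similarly for $m_1$. I would then invoke the Lipschitz (indeed $\C^1$) dependence of $U$ and its derivatives on the measure argument, together with uniform bounds on $D_{\mu_0}U$, $D_{\mu_1}U$ and $D_{\mu_0\mu_0}U$ (all available from the explicit form \eqref{solution}), to bound the difference between the $\nu^k$- and $\nu^i$-evaluations of the first coupling factor by $O(1/N)$ per summand; since this is multiplied by the $O(1)$ second factor, weighted by $\frac{1}{N-1}$, and summed over $N-1$ indices, the total discrepancy is $O(1/N)$. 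Collecting this discrepancy with the second-order diffusion piece and the $O(1/N)$ corrections from the running-cost and terminal terms, all of which are finite in number and each bounded by a constant over $N$ on the relevant bounded set of states, yields the single remainder $e^i$ with $\|e^i(t,z)\| < C/N$; every remaining term has combined into \eqref{master} at $(t,z_0^i,z_1^i,\nu^i)$ and hence vanishes.
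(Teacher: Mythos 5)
Your proposal is correct and follows essentially the same route as the paper's proof: substitute $u^i = U(t,z_0^i,z_1^i,\nu^i)$, use the identities \eqref{uU} to convert off-diagonal derivatives into empirical-measure integrals, control the $\nu^k$-versus-$\nu^i$ mismatch in the coupling sum by the Lipschitz dependence of $\partial_z U$ on the measure (so that $\mathbf{d}_{MK}(\nu^i,\nu^k) = O(1/N)$ yields an $O(1/N)$ error), and absorb the second-order measure-derivative diffusion term into $e^i$. Your explicit treatment of the $(1-\tfrac1N)$ factor in the running cost and terminal data is a small bookkeeping point the paper leaves implicit, but it is the same argument.
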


\begin{proof}

We  compute each term in the above equation in terms of $U$ using the relationship \eqref{uU}, and we use the fact that $U$ is a solution to the master equation.

\begin{itemize}
\item 
\begin{align*}
& \sum_{k=1}^N \frac{1}{2} \sigma^2 \partial_{z_0^k z_0^k} u^i(t, z_0, z_{1})\\
= & \frac{1}{2} \sigma^2 \partial_{z_0^i z_0^i} u^i(t, z_0, z_{1}) + \sum_{k \neq i} \frac{1}{2} \sigma^2 \partial_{z_0^k z_0^k} u^i (t, z_0,z_{1})\\
= & \frac{1}{2} \sigma^2 \partial_{z_0^i z_0^i} U (t, z_0^i, z_{1}^i, \nu^i) 
+ \frac{1}{2} \sigma^2 \sum_{k \neq i} \frac{1}{N-1} \partial_{z_0^k}[D_{\mu_0^i} U](t, z_0^i, z_{1}^i, \nu^i, z_0^k)\\
& \quad + \frac{1}{2} \sigma^2 \sum_{k \neq i } \frac{1}{(N-1)^2} D_{\mu_0^i \mu_0^i} U (t, z_0^i, z_{1}^i, \nu^i, z_0^k, z_0^k)\\
= & \frac{1}{2} \sigma^2 \partial_{z_0^i z_0^i} U (t, z_0^i, z_{1}^i, \nu^i) 
+ \frac{1}{2} \sigma^2 \int_\R \partial_{y_0}[D_{\mu_0^i} U](t, z_0^i, z_{1}^i, \nu^i, y_0) d\mu_{0}^i(y_0) \\
 & \quad + \frac{1}{2} \sigma^2 \frac{1}{N-1} \int_\R D_{\mu_0^i \mu_0^i} U(t, z_0^i, z_{1}^i, \nu^i, y_0, y_0) d\mu_{0}^i(y_0). 
\end{align*}

\item 
\begin{align*}
& \sum_{k=1}^N \int_{-\tau}^0 z_{1}^k \frac{d}{ds} (\partial_{z_{1}^k} u^i) ds\\
= & \int_{-\tau}^0 z_{1}^i \frac{d}{ds} (\partial_{z_{1}^i} u^i) ds + \sum_{k \neq i} \int_{-\tau}^0 z_{1}^k \frac{d}{ds} (\partial_{z_{1}^k} u^i) ds\\
= & \int_{-\tau}^0 z_{1}^i \frac{d}{ds} (\partial_{z_{1}^i} U) ds + \sum_{k \neq i} \int_{-\tau}^0 z_{1}^k \frac{d}{ds} \left[ \frac{1}{N-1} D_{\mu_{1}^i} U (t, z_0^i, z_{1}^i, \nu^i, z_1^k) \right] ds\\
= & \int_{-\tau}^0 z_{1}^i \frac{d}{ds} (\partial_{z_{1}^i} U) ds + \int_{-\tau}^0 \int_\H y_{1} \frac{d}{ds} \left[ D_{\mu_{1}^i} U(t, z_0^i, z_{1}^i, \nu^i, y_{1})\right] d \mu_{1}^i(y_1) ds.
\end{align*}

\item  From the solution \eqref{solution} of the master equation, $\partial_{z}U$  is Lipschitz with respect to the measures. Namely,
\begin{equation}
\begin{split}
& |\partial_{z_0} U(t, z^k, \nu^i) - \partial_{z_0} U(t, z^k, \nu^k)| \leq C_1 (d_{MK}(\mu_0^i, \mu_0^k) + d_{MK} (\mu_1^i, \mu_1^k)) \leq \frac{C_1}{N-1},\\
& \|\partial_{z_1} U(t, z^k, \nu^i) - \partial_{z_1} U(t, z^k, \nu^k)\|_\H \leq C_2 (d_{MK}(\mu_0^i, \mu_0^k) + d_{MK} (\mu_1^i, \mu_1^k)) \leq \frac{C_2}{N-1}.\\
\end{split}
\end{equation}
Thus, 
\begin{align*}
& \sum_{k \neq i} (\partial_{z_0^k} u^k - [\partial_{z_{1}^k} u^k] (-\tau)) (\partial_{z_0^k} u^i - [\partial_{z_{1}^k} u^i](-\tau))\\
    = & \sum_{k \neq i} \partial_{z_0^k} U(t, z_0^k, z_{1}^k, \nu^k) \left(\frac{1}{N-1} D_{\mu_0^i} U(t, z_0^i, z_{1}^k, \nu^i, z_0^k) - \frac{1}{N-1} [D_{\mu_1^i} U(t, z_0^i, z_1^i, \nu^i, z_1^k)] (-\tau)\right)\\
& - \sum_{k \neq i} [\partial_{z_1^k}U(t, z_0^k, z_1^k, \nu^k)](-\tau) \left(\frac{1}{N-1} D_{\mu_0^i} U(t, z_0^i, z_{1}^k, \nu^i, z_0^k) - \frac{1}{N-1} [D_{\mu_1^i} U(t, z_0^i, z_1^i, \nu^i, z_1^k)](-\tau)\right)\\
    = & \sum_{k \neq i} \partial_{z_0^k} U(t, z_0^k, z_{1}^k, \nu^i) \left(\frac{1}{N-1} D_{\mu_0^i} U(t, z_0^i, z_{1}^k, \nu^i, z_0^k) - \frac{1}{N-1} [D_{\mu_1^i} U(t, z_0^i, z_1^i, \nu^i, z_1^k)](-\tau)\right)\\
& - \sum_{k \neq i} [\partial_{z_1^k}U(t, z_0^k, z_1^k, \nu^i)](-\tau) \left(\frac{1}{N-1} D_{\mu_0^i} U(t, z_0^i, z_{1}^k, \nu^i, z_0^k) - \frac{1}{N-1} [D_{\mu_1^i} U(t, z_0^i, z_1^i, \nu^i, z_1^k)](-\tau)\right) \\
& + O(\frac{1}{N})\\
= & \int_{-\tau}^0 \int_{\R \times \H} \left( \partial_{y_0}U - \partial_{y_1} U \right) (t, y_0, y_{1}, \nu^i) \cdot\left(D_{\mu_0^i} U-  D_{\mu_{1}^i}U  \right) (t, z_0^i, z_{1}^i, \nu^i, y_0, y_1) d\nu (y_0, y_1) \delta_{-\tau} (s)ds \\
&  + O(\frac{1}{N}).\\
\end{align*}
\end{itemize}

Then,
\begin{align*}
 & \partial_t u^i + \sum_{k=1}^N \frac{1}{2} \sigma^2 \partial_{z_0^k z_0^k} u^i + \sum_{k=1}^N \int_{-\tau}^0 z_{1}^k  \frac{d}{ds} (\partial_{z_{1}^k} u^i) ds \\
 & \quad -\sum_{k\neq i}^N (\partial_{z_0^k} u^k - [\partial_{z_{1}^k}u^k](-\tau)) (\partial_{z_0^k} u^i - [\partial_{z_{1}^k} u^i](-\tau)) - \frac{1}{2} \left( \partial_{z_0^i} u^i - [\partial_{z_{1}^i} u^i] (-\tau) \right)^2 + \frac{\e}{2}(\bar{z}_0 - z_0^i)^2\\
 = &  \partial_t U + \frac{1}{2} \sigma^2 \partial_{z_0^i z_0^i} U (t, z_0^i, z_{1}^i, \nu^i) + \frac{1}{2} \sigma^2 \int_\R \partial_{y_0}[D_{\mu_0^i} U](t, z_0^i, z_{1}^i, \nu^i, y_0) d\mu_{0}^i(y_0) \\
& \quad +  \int_{-\tau}^0 z_{1}^i \frac{d}{ds} (\partial_{z_{1}^i} U) ds + \int_{-\tau}^0 \int_\H y_{1} \frac{d}{ds} \left[ D_{\mu_{1}^i} U(t, z_0^i, z_{1}^i, \nu^i, y_{1})\right] d \mu_{1}^i(y_1) ds\\
& \quad - \int_{\R \times \H} \left( \partial_{y_0} U - [\partial_{y_1} U](-\tau) \right) (t, y_0, y_{1}, \nu^i) \cdot\left(D_{\mu_0^i} U-  [D_{\mu_{1}^i}U] (-\tau)  \right) (t, z_0^i, z_{1}^i, \nu^i, y_0, y_1) d\nu^i (y_0, y_1) \\
& \quad -  \frac{1}{2}(\partial_{z_0^i} U - [\partial_{z_{1}^i} U](-\tau))^2
 + \frac{\e}{2} \left(\int y_0 d\mu_0^i(y_0) - z_0^i \right)^2\\
& \quad + O(\frac{1}{N}) +  \frac{1}{2} \sigma^2 \frac{1}{N-1} \int_\R D_{\mu_0^i \mu_0^i} U(t, z_0^i, z_{1}^i, \nu^i, y_0, y_0) d\mu_{0}^i(y_0) \\
= & O(\frac{1}{N}).
 \end{align*}

\end{proof}

\begin{Theorem}
Let $V^i$ be the solution to the HJB equation \eqref{hjbnplayer} of the $N$-player system, where $N \geq 1$ fixed, and $U$ be the solution to the master equation \eqref{master}. Fix any $(t_0, \nu_0) \in [0, T] \times \cP(\R \times \H)$. Then for any $z \in \R^N$, let $\nu^i = \frac{1}{N-1} \sum_{j \neq i}^N \delta_{(z_0^j, z_1^j)}$, we have
\begin{equation}
\frac{1}{N} \sum_{i=1}^N |V^i(t_0, z) - U(t_0, z^i, \nu^i)| \leq C N^{-1}.
\end{equation}

\end{Theorem}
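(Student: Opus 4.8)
The plan is to prove the estimate by a coupling / It\^o-verification argument in the spirit of \citep{Cardali_Delarue_Lasry_Lions:2015}, using the previous Proposition as the engine: it says precisely that $u^i(t,z):=U(t,z_0^i,z_1^i,\nu^i)$ solves the Nash system \eqref{hjbnplayer} up to a source term $e^i$ with $\|e^i\|<C/N$, and that $u^i(T,\cdot)=\tfrac{c}{2}(\bar z_0-z_0^i)^2=V^i(T,\cdot)$. Writing $w^i:=V^i-u^i$, we then have $w^i(T,\cdot)=0$ for every $i$, and the whole task reduces to a stability estimate: a genuine solution of the Nash system ($V^i$) and an approximate solution ($u^i$) with $O(1/N)$ defect must stay $O(1/N)$ apart on average.

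First I would fix $(t_0,z)$ and introduce the controlled $N$-particle process $(Z_t)_{t_0\le t\le T}$ in $(\R\times\H)^N$ started at $Z_{t_0}=z$, in which each player $k$ uses the \emph{exact} Nash feedback $\hat\alpha^{V,k}_t=-\bigl(\partial_{z_0^k}V^k-[\partial_{z_1^k}V^k](-\tau)\bigr)$ evaluated along the path, so that $dZ^k_t=(AZ^k_t+B\hat\alpha^{V,k}_t)\,dt+G\,dW^k_t$. Applying the infinite-dimensional It\^o formula to $w^i(t,Z_t)$ between $t_0$ and $T$, the second-order term $\tfrac12\sigma^2\sum_k\partial_{z_0^kz_0^k}$ and the transport term $\sum_k\int_{-\tau}^0 z_1^k\tfrac{d}{ds}\partial_{z_1^k}(\cdot)\,ds$ are exactly the generator acting on $w^i$; substituting the two PDEs (the exact one for $V^i$ and equation \eqref{u} with defect $e^i$ for $u^i$) replaces $\partial_t w^i$ plus this generator by $e^i$ together with the difference of the quadratic interaction terms.

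The crucial algebra is the treatment of that quadratic difference. Each nonlinear term has the form $a^k b^k$ with $a^k,b^k$ affine in the gradients, so $a^k b^k-\tilde a^k\tilde b^k=a^k(b^k-\tilde b^k)+(a^k-\tilde a^k)\tilde b^k$ linearizes the difference; because the trajectory is driven by the $V$-feedback, the diagonal ($k=i$) Hamiltonian sits at its minimum, so along this path the $u^i$-equation contributes a sign-favourable term $\tfrac12\bigl(B^*\partial_z w^i\bigr)^2\ge 0$ that may simply be dropped, while the off-diagonal sum $\sum_{k\ne i}$ is controlled by the measure-Lipschitz bounds on $\partial_z U$ already recorded in the proof of the Proposition (constants $C_1,C_2$), each off-diagonal derivative carrying a factor $1/(N-1)$ by \eqref{uU}. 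Taking expectations (the $G\,dW$ stochastic integral is a martingale and drops out), summing over $i$ and dividing by $N$, the terminal contribution vanishes, the accumulated defect is $\int_{t_0}^T O(1/N)\,dt=O(1/N)$, and the averaged off-diagonal terms are likewise $O(1/N)$; the residual linear-in-$w$ terms are absorbed by a Gronwall inequality applied to $t\mapsto \tfrac1N\sum_i\E|w^i(t,Z_t)|$. This yields $\tfrac1N\sum_i\bigl(V^i(t_0,z)-u^i(t_0,z)\bigr)\le C/N$, and running the same argument along the trajectory driven instead by the $u$-feedback $\hat\alpha^{u,k}$ gives the reverse inequality, so the two combine to bound $\tfrac1N\sum_i|V^i(t_0,z)-U(t_0,z^i,\nu^i)|$ by $C/N$.

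The main obstacle I anticipate is twofold. Analytically, the It\^o formula must be applied in the infinite-dimensional state $\R\times\H$ with the \emph{unbounded} operator $A$ (the transport $-d/ds$ together with the boundary traces at $0$ and $-\tau$ encoded in $B$ and $B^*$), so the expansion has to be read in the mild/variational sense used earlier rather than naively; the quadratic-in-$z_1$ structure of the ansatz \eqref{solution} keeps all the relevant traces and boundary evaluations finite, which is what makes this legitimate. Algebraically, the delicate point is ensuring that summing the quadratic interaction differences over $i$ does not generate an $O(1)$ term: this is exactly where the $1/(N-1)$ smallness of each off-diagonal derivative in \eqref{uU} and the Lipschitz-in-measure estimates are indispensable, and it is the reason the estimate is obtained only in Ces\`aro-averaged form rather than player by player.
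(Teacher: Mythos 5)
Your overall architecture matches the paper's: both arguments take the Proposition as the engine (so that $u^i(t,z)=U(t,z_0^i,z_1^i,\nu^i)$ solves the Nash system up to a source $e^i$ of size $C/N$ and agrees with $V^i$ at time $T$), both apply the infinite-dimensional It\^o formula along the trajectory driven by the exact Nash feedbacks $-B^*\partial_{z^k}V^k$, and both close with Young's inequality, Ces\`aro averaging over $i$, and Gronwall. The decisive difference is that the paper does \emph{not} run Gronwall on the first power of the difference: after subtracting \eqref{dv} from \eqref{du} it applies It\^o once more to $(u^i-V^i)^2$, so that the quadratic-variation term $\sum_k\sigma^2|\partial_{z_0^k}u^i-\partial_{z_0^k}V^i|^2\,dt$ appears with a favourable sign and, after integration, sits on the \emph{left}-hand side of the estimate as a coercive term. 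That term is not decoration; it is the only mechanism in the proof that controls gradient differences, and the averaged version $\frac1N\sum_i\sigma^2\E\int_t^T|\partial_{z_0^i}u^i_s-\partial_{z_0^i}V^i_s|^2\,ds$ is what absorbs the off-diagonal interaction terms.

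This is exactly where your first-power argument has a genuine gap. Along the $V$-feedback trajectory the surviving off-diagonal contribution to $d(V^i-u^i)$ is of the form $\sum_{k\neq i}\bigl(B^*\partial_{z^k}u^i\bigr)\cdot\bigl(B^*\partial_{z^k}(V^k-u^k)\bigr)$. The first factor is indeed $O(1/(N-1))$ by \eqref{uU}, but the second factor is a difference of \emph{diagonal} gradients of the unknown Nash solution $V^k$ and of $u^k$: it is not covered by the measure-Lipschitz bounds $C_1,C_2$ you invoke (those compare $\partial_zU$ at two different measures, $\nu^i$ versus $\nu^k$, and say nothing whatsoever about $V^k$), and it cannot be bounded by $|V^k-u^k|$, so Gronwall applied to $\frac1N\sum_i\E|w^i(t,Z_t)|$ does not close. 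Summing $N-1$ terms of size $O(1/(N-1))\cdot|B^*\partial_{z^k}(V^k-u^k)|$ leaves an $O(1)\cdot\sup_k|B^*\partial_{z^k}(V^k-u^k)|$ remainder that your scheme has no way to estimate; in the paper this quantity is estimated \emph{simultaneously} with the function differences, in integrated $L^2$ form, precisely thanks to the squared-difference/quadratic-variation structure (one could alternatively bound $\partial_z(V^k-u^k)$ directly from the explicit LQ formulas \eqref{vi} and \eqref{solution} by comparing the systems \eqref{epde1} and \eqref{epde2}, but your proposal does not do this either). A secondary slip: along the $V$-feedback trajectory the completed square enters with the sign $(V^i-u^i)(t_0,z)=\E\int_{t_0}^T\bigl[\tfrac12\bigl(B^*\partial_{z^i}(V^i-u^i)\bigr)^2-e^i-\cdots\bigr]dt$, so dropping it yields a \emph{lower} bound on $V^i-u^i$ (i.e., an upper bound on $u^i-V^i$), not the inequality you state; the pairing of trajectory to inequality direction is swapped, though this by itself would be harmless since you run both trajectories anyway.
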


\begin{proof}
We first apply Ito's formula to $(V^i)_{i \in \{1, \dots, N\}}$, and use the fact that $V^i$ satisfies the HJB equation \eqref{hjbnplayer} for the Nash system. 
\begin{equation} \label{dv}
\begin{aligned}
& d V^i(t, Z_t)\\
= & \partial_t V^i dt + \partial_z V^i d Z_t + \frac{1}{2} Tr( \partial_{zz} V^i d[Z, Z]_t)\\
= & \partial_t V^i dt  + \langle A Z, \partial_{z} V^i \rangle dt + \langle B \hat{\alpha}^i, \partial_{z} V^i \rangle dt + \langle \partial_z V^i, G \rangle dW_t +  \frac{1}{2} Tr( G^* G\partial_{zz} V^i ) dt\\
=& \partial_t V^i dt + \sum_{k=1}^N \int_{-\tau}^0 z_1^k \frac{d}{ds} (\partial_{z_1^k} V^i) ds dt 
- \sum_{k=1}^N (\partial_{z_0^k} V^i - [\partial_{z_1^k} V^i](-\tau))(\partial_{z_0^k} V^k - [\partial_{z_1^k} V^k](-\tau)) dt \\
& \quad + \sum_{k=1}^N \frac{1}{2} \sigma^2 \partial_{z_0^k z_0^k} V^i dt
+ \sum_{k=1}^N \sigma \partial_{z_0^k} V^i  dW_t^k\\
= & \left[-\frac{1}{2} (\partial_{z_0^i} V^i - [\partial_{z_1^i} V^i] (-\tau))^2 - \frac{\e}{2}(\bar{Z}_0 - Z_0^i)^2 \right] dt + \sum_{k=1}^N \sigma \partial_{z_0^k} V^i  dW_t^k.\\
\end{aligned}
\end{equation}

Then, we apply Ito's formula to $u^i(t, Z_t)$, and use the fact that $u$ satisfies \eqref{u}
\begin{equation}\label{du}
\begin{aligned}
& d u^i(t, Z)\\
= & \partial_t u^i dt + \partial_z u^i d Z_t + \frac{1}{2} Tr( \partial_{zz} u^i d[Z, Z]_t)\\
= & \partial_t u^i dt  + \langle A Z, \partial_{z} u^i \rangle dt + \langle B \hat{\alpha}^i, \partial_{z} u^i \rangle dt + \langle \partial_z u^i, G\rangle dt +  \frac{1}{2} Tr( G^* G\partial_{zz} u^i ) dt\\
=& \partial_t u^i dt + \sum_{k=1}^N \int_{-\tau}^0 z_1^k \frac{d}{ds} (\partial_{z_1^k} u^i) ds dt 
- \sum_{k=1}^N (\partial_{z_0^k} u^i - [\partial_{z_1^k} u^i](-\tau))(\partial_{z_0^k} V^k - [\partial_{z_1^k} V^k](-\tau)) dt \\
& \quad + \sum_{k=1}^N \frac{1}{2} \sigma^2 \partial_{z_0^k z_0^k} u^i dt
+ \sum_{k=1}^N \sigma \partial_{z_0^k} u^i  dW_t^k\\
= & \sum_{k=1}^N \left(\partial_{z_0^k} u^k - [\partial_{z_1^k} u^k](-\tau) \right) \left(\partial_{z_0^k} u^i - [\partial_{z_1^k} u^i](-\tau) \right)dt \\
& \quad - \sum_{k=1}^N (\partial_{z_0^k} u^i - [\partial_{z_1^k} u^i](-\tau))(\partial_{z_0^k} V^k - [\partial_{z_1^k} V^k](-\tau)) dt 
- \frac{1}{2} (\partial_{z_0^i} u^i - [\partial_{z_1^i} u^i] (-\tau))^2dt\\
& \quad  - \frac{\e}{2}(\bar{Z}_0 - Z_0^i)^2 dt
- e^i dt
+ \sum_{k=1}^N \sigma \partial_{z_0^k} u^i  dW_t^k.\\
\end{aligned}
\end{equation}

Substracting \eqref{dv} from \eqref{du},  taking the square and applying Ito's formula again, we obtain
%we take the difference of \eqref{dv} and \eqref{du}, take the square and apply Ito's formula again, we have
\begin{equation}
\begin{aligned}\label{(u-v)^2}
& d[u^i(t, Z_t) - V^i(t, Z_t)]^2\\
= & 2[u^i(t, Z_t) - V^i(t, Z_t)](du^i(t, Z_t) - dV^i(t, Z_t)) + d[u^i - V^i, u^i - V^i]_t\\
= & -2(u^i - V^i) \left(\frac{1}{2} (\partial_{z_0^i} u^i - [\partial_{z_1^i} u^i](-\tau))^2 - \frac{1}{2} (\partial_{z_0^i} V^i - [\partial_{z_1^i} V^i](-\tau))^2 \right) dt
- 2 (u^i - V^i) e^i dt \\
& \quad - 2(u^i - V^i) \left(\sum_{k=1}^N \left(\partial_{z_0^k} u^i - [\partial_{z_1^k} u^i](-\tau) \right)  \left( (\partial_{z_0^k} V^k - \partial_{z_0^k}u^k) - \left([\partial_{z_1^k} V^k](-\tau) - [\partial_{z_1^k} u^k] (-\tau) \right) \right) \right) dt\\
& \quad + \sum_{k=1}^N \sigma^2 |\partial_{z_0^k} u^i - \partial_{z_0^k} V^i|^2 dt 
+ \sum_{k=1}^N \sigma \left( \partial_{z_0^k} u^i - \partial_{z_0^k} V^i \right) dW_t^k.
\end{aligned}
\end{equation}

Recall that $\partial_{z_0^k} u^i (t, z_0, z_1) = \frac{1}{N-1} D_{\mu_0^i} U(t, z_0^i, z_1^i, \nu^i, z_0^k)$ is bounded by $\frac{C}{N}$ for $k \neq i$, and $e^i$ is bounded by $\frac{C}{N}$.  Let $(\Xi^i)_{i \in \{1, \dots, N\}}$ be a family of independent random variable with common law $\nu_0$. By integrating \eqref{(u-v)^2} from $t$ to $T$, and taking expectation conditional on $\Xi$, we have
\begin{equation}
\begin{aligned}
& \E^{\Xi}[|u^i_t - V^i_t|^2] + \sigma^2 \sum_{k=1}^N \E^\Xi \left[\int_t^T |\partial_{z_0^k} u^i_s - \partial_{z_0^k} V^i_s|^2 ds \right] \\
& \quad \quad + C \E^{\Xi} \left[\int_t^T |u_s^i - V_s^i| \cdot | [\partial_{z_1^i} u_s^i](-\tau) - [\partial_{z_1^i} V^i_s](-\tau) | \right]ds\\
 & \quad \quad + \frac{C}{N} \sum_{k=1,k\neq i}^N \E^{\Xi} \left[\int_t^T |u_s^i - V_s^i|\cdot |[\partial_{z_1^k} u_s^k](-\tau) - [\partial_{z_1^k} V_s^k](-\tau) | \right] ds
\\
\leq & \E^{\Xi} [|u^i_T - V^i_T|^2] + C \E^{\Xi} \left[\int_t^T |u_s^i - V_s^i| \cdot|\partial_{z_0^i} u_s^i - \partial_{z_0^i} V^i_s| \right]ds\\
& \quad \quad + \frac{C}{N} \sum_{k=1,k\neq i}^N  \E^\Xi \left[\int_t^T |u_s^i - V_s^i|\cdot |\partial_{z_0^k} u_s^k - \partial_{z_0^k} V_s^k| \right] ds\\
& \quad \quad + \frac{C}{N} \int_t^T \E^\Xi[|u_s^i - V_s^i|]ds.\\
\end{aligned}
\end{equation}
By the fact that $u^i_T = V_T^i$, and using  Young's inequality, we have 
\begin{equation}
\begin{aligned}
& \E^{\Xi} [|u_t^i - V_t^i|^2] + \E^{\Xi} \left[\int_{t}^T |\partial_{z_0^i} u_s^i - \partial_{z_0^i} V_s^i|^2 ds \right]\\
\leq & 0 + \frac{C}{2 \e_1} \E^{\Xi} \left[\int_t^T |u_s^i - V_s^i|^2 ds \right] + \frac{C \e_1}{2} \E^{\Xi}\left[\int_{t}^T |\partial_{z_0^i} u_s^i - \partial_{z_0^i} V_s^i|^2 ds \right] + \frac{C}{2 N \e_2} \sum_{k=1}^N \E^{\Xi} \left[ \int_t^T |u_s^i - V_s^i|^2 ds\right]\\
& + \frac{C \e_2}{ 2N} \sum_{k=1}^N \E^{\Xi} \left[\int_t^T |\partial_{z_0^k} u_s^k - \partial_{z_0^k} V_s^k|^2  ds\right]
+ \frac{C}{2N \e_3} \E^{\Xi} \left[ \int_t^T |u_s^i - V_s^i|^2 ds \right] + \frac{C \e_3}{2N} \int_t^T 1 ds\\ 
\leq & \frac{C}{N^2} + C  \E^{\Xi} \left[\int_t^T |u_s^i - V_s^i|^2 ds \right] + \frac{C}{2N} \sum_{k=1}^N \E^{\Xi} \left[ \int_t^T  |\partial_{z_0^k} u_s^k - \partial_{z_0^k} V_s^k|^2 ds \right].\\
\end{aligned}
\end{equation}
Taking average on both sides, we have
\begin{equation}
\begin{aligned}
& \frac{1}{N} \sum_{i= 1}^N \E^{\Xi}[|u_t^i - V_t^i|^2] + \frac{1}{N} \sum_{i=1}^N \E^{\Xi} \left[ \int_t^T |\partial_{z_0^i} u_s^i - \partial_{z_0^i} V_s^i|^2 ds \right]\\
\leq & \frac{C}{N^2} + \frac{1}{N}\sum_{i=1}^N C \E^{\Xi} \left[ \int_t^T |u_s^i - V_s^i|^2 ds \right] + \frac{C}{2N} \sum_{k=1}^N \E^{\Xi} \left[ \int_t^T  |\partial_{z_0^k} u_s^k - \partial_{z_0^k} V_s^k|^2 ds \right] \\
\Rightarrow & \frac{1}{N} \sum_{i= 1}^N \E^{\Xi}[|u_t^i - V_t^i|^2] \leq \frac{C}{N^2} + C \E^{\Xi} \left[ \frac{1}{N} \sum_{i=1}^N \int_t^T |u_s^i - V_s^i|^2 ds\right].
\end{aligned}
\end{equation}
By Gronwall's inequality and taking supremum over $[0,T]$, we have
\begin{equation}
\sup_{t \in [0, T]} \left[ \frac{1}{N} \sum_{i=1}^N \E^{\Xi}|u_t^i - V_t^i|^2\right] \leq \frac{C}{N^2},
\end{equation}
which implies 
\begin{equation}
\frac{1}{N} \sum_{i=1}^N |u^i(t_0, \Xi) - V^i(t_0, \Xi)| \leq \frac{C}{N}.
\end{equation}
Choosing $\Xi$ uniformly distributed in $(\R \times \H)^N$, then by continuity of $u^i$ and $V^i$, and the fact that $u^i(t, Z)$ is defined by $U(t, Z_0^i, Z_1^i, \nu^i)$, we have, for any $z \in (\R \times \H)^N$,
\begin{equation}
\frac{1}{N} \sum_{i=1}^N |U(t_0, z^i, \nu_0^i) - V^i(t_0, z)| \leq \frac{C}{N}.
\end{equation}
\end{proof}

\section{Conclusion}\label{sec:conclusion}
The mean field game system acts as a characteristic of the master equation. The master equation contains all the information in the mean field game system, and it turns the forward-backward PDE into a single equation. The solution to the mean field game system is a pair $(V, \nu)$, that is the value function and the joint law of current state and past law. The solution to the master equation is a function of $(t, z, \nu)$.

Since our model is linear quadratic, we are able to solve both the mean field game system and the master equation as shown in Section \ref{sec:MFG} and Section \ref{sec:mastereq}, however, the techniques are not the same. The technique for solving the mean field game is that we first make an ansatz for the solution of the HJB equation. Then plugging this ansatz into the Fokker-Planck equation \eqref{fp}, we find that the means of state and past control are constant. Hence, the ansatz \eqref{ansatzmfg} can be verified. On the other hand,  a notion of derivative with respect to measure is needed in order to solve the master equation. Again, we make an ansatz \eqref{solution}, which has a similar form as \eqref{ansatzmfg} but is a function of $(t, z, \nu)$, and we verify that it satisfies the master equation. 

The sets of PDEs \eqref{epde2} with boundary conditions  \eqref{boundary2} are the same for the two problems. This is due to the fact that our model is linear-quadratic and the means of states and past controls are constants. 

Last but not the least, the Nash equilibrium of the corresponding $N$-player game is presented in Section \ref{sec:gamedelay}. The value function \eqref{vi} looks similar to the value function \eqref{ansatzmfg} in the mean field game system and the solution \eqref{solution} to the master equation. As $N \to \infty$, the set of PDEs \eqref{epde1} becomes the same as \eqref{epde2}. This implies that the solution to the mean filed game appears to be the limit of the Nash system, but generally, the convergence has been known in very few specific situations. Additionally,  the solution to the master equation is also a limit to the Nash system, as shown in Section \ref{sec:convergence}. 

To summarize, we have extended the notion of master equation in the context of our toy model with delay, and we have shown that, as in the case without delay, this master equation provides an approximation to the corresponding finite-player game with delay. A general form of such a result, not necessarily for linear-quadratic games,  is part of our ongoing research.

\appendix

\section{Adjoint operator}
\label{appendix}
Let $\varphi$ be a smooth test function defined on $\R \times \H$. In the following computation, we use  the notation
\[
 \langle \varphi, \nu(t)\rangle
= \int_{\R \times \H} \varphi(z) d\nu(t,z).
\]
If the test function $\varphi$ is of the form $\varphi(z)=\int_{-\tau}^0\psi(z_0, z_1(s))ds$ for a smooth function $\psi$ defined on $\R^2$, then
\[
 \langle \varphi, \nu(t)\rangle=
 \int_{-\tau}^0 \int_{\R \times \R} \psi(z_0,z_1(s)) \nu(t,z_0, z_1(s)) dz_0dz_{1}(s) ds,
 \] 
 where $\nu(t,z_0, z_1(s))$ is understood as a two-dimensional density.
By abuse of notation, we also use
\[
 \langle \varphi, \nu(t)\rangle=\int_{\R \times \H} \varphi(z) \nu(t,z) dz =
 \int_{-\tau}^0 \int_{\R \times \R} \psi(z) \nu(t,z) dz ds.
\]
Then, we have
\begin{align*}
& \langle\L_t \varphi, \nu(t)\rangle\\
= & \int_{-\tau}^0 \int_{\R \times \R} z_1 \frac{d \partial_{z_1} \varphi(z)}{ds} \nu(t,z) dz ds 
+ \int_{\R \times \H} - (\partial_{z_0} V - [\partial_{z_1} V](-\tau) )\partial_{z_0} \varphi(z) \nu(t,z) dz \\
& \quad - \int_{-\tau}^0 \int_{\R \times \R}   - (\partial_{z_0} V - [\partial_{z_1} V](-\tau)) \partial_{z_1} \varphi(z) \delta_{-\tau}(s) \nu(t,z) dz ds \\
& \quad + \int_{\R \times \H} \frac{1}{2} \sigma^2 \partial_{z_0 z_0} \varphi(z) \nu(t,z) dz\\
= & -\int_{-\tau}^0 \int_{\R \times \R} \frac{d z_1}{ds}\partial_{z_1} \varphi(z) \nu(t,z) dz ds + \int_{-\tau}^0 \int_{\R \times \R} z_1 \partial_{z_1} \varphi(z) \nu(t,z) (\delta_{0}(s) - \delta_{-\tau}(s)) dz ds\\
& \quad + \int_{\R \times \H} \partial_{z_0} \left\{ (\partial_{z_0} V - [\partial_{z_1}V](-\tau)) \nu(t,z) \right\} \varphi(z) dz\\
& \quad - \int_{-\tau}^0 \int_{\R \times \R} \partial_{z_1} \left\{ (\partial_{z_0} V 
- [\partial_{z_1} V](-\tau)) \nu(t,z) \right\} \delta_{-\tau}(s) \varphi(z) dz ds\\
& \quad + \int_{\R \times \H} \frac{1}{2} \sigma^2 \partial_{z_0 z_0} \nu(t,z) \varphi(z) dz\\
= & \int_{-\tau}^0 \int_{\R \times \R} \partial_{z_1} \left(\frac{d z_1}{ds}\nu(t,z) \right) \varphi(z)  dz ds 
- \int_{-\tau}^0 \int_{\R \times \R} \partial_{z_1} (z_1  \nu(t,z)) \varphi(z) (\delta_{0}(s) - \delta_{-\tau}(s)) dz ds\\
& \quad + \int_{\R \times \H} \partial_{z_0} \left\{ ( \partial_{z_0} V - [\partial_{z_1}V](-\tau)) \nu(t,z)\right\} \varphi(z) dz\\
& \quad  - \int_{-\tau}^0 \int_{\R \times \R} \partial_{z_1} \left\{(\partial_{z_0} V 
- [\partial_{z_1} V](-\tau)) \nu(t,z)\right\} \delta_{-\tau}(s) \varphi(z) dz ds\\
& \quad + \int_{\R \times \H} \frac{1}{2} \sigma^2 \partial_{z_0 z_0} \nu(t,z) \varphi(z) dz\\
= & \langle\varphi, \L_t^* \nu(t)\rangle.
\end{align*}

\bibliographystyle{plain}
\bibliography{master}

\begin{thebibliography}{10}

\bibitem{Bensoussan_book:2007}
Alain Bensoussan, Giuseppe~Da Prato, Michel~C. Delfour, and Sanjoy Mitter.
\newblock {\em Representation and Control of Infinite Dimensional Systems}.
\newblock Birkhauser Basel, 2007.

\bibitem{Cardali_Delarue_Lasry_Lions:2015}
Pierre {Cardaliaguet}, Francois {Delarue}, Jean-Michel {Lasry}, and
  Pierre-Louis {Lions}.
\newblock {The master equation and the convergence problem in mean field
  games}.
\newblock {\em ArXiv: 1509.02505}, 2015.

\bibitem{Carmona_Delarue:2014}
Rene {Carmona} and Francois {Delarue}.
\newblock {The Master Equation for Large Population Equilibriums}.
\newblock {\em ArXiv: 1404.4694}, 2014.

\bibitem{Carmona_Delarue_vol:2018}
Rene Carmona and Francois Delarue.
\newblock {\em Probabilistic Theory of Mean Field Games with Applications I \&
  II}.
\newblock Springer International Publishing, 2018.

\bibitem{Carmona_Fouque_Mousavi_Sun:2018}
Rene Carmona, Jean-Pierre Fouque, Seyyed~Mostafa Mousavi, and Li-Hsien Sun.
\newblock Systemic risk and stochastic games with delay.
\newblock {\em Journal of Optimization and Applications (JOTA)}, 2018.

\bibitem{Carmona_Fouque_Sun:2015}
Rene Carmona, Jean-Pierre Fouque, and Li-Hsien Sun.
\newblock Mean field games and systemic risk.
\newblock {\em Communications in Mathematical Sciences}, 13(4):911--933, 2015.

\bibitem{Chassa_Crisan_Delarue:2015}
Jean-Francois {Chassagneux}, Dan {Crisan}, and Francois {Delarue}.
\newblock {A Probabilistic approach to classical solutions of the master
  equation for large population equilibria}.
\newblock {\em ArXiv: 1411.3009}, 2014.

\bibitem{DaPrato_Zabczyk:2008}
Guiseppe Da{\ }Prato and Jerzy Zabczyk.
\newblock {\em Stochastic Equations in Infinite Dimensions}.
\newblock Cambridge University Press, 2008.

\bibitem{Fabbi_Gozzi_Swiech:2017}
Giorgio Fabbri, Fausto Gozzi, and Andrzej Swiech.
\newblock {\em Stochastic Optimal Control in Infinite Dimension}.
\newblock Springer International Publishing, 2017.

\bibitem{Gozzi_Marinelli:2006}
Fausto Gozzi and Carlo Marinelli.
\newblock Stochastic optimal control of delay equations arising in advertising
  models.
\newblock {\em Stochastic PDEs and Applications VII}, 245:133--148, 2006.

\bibitem{kolo}
Vassili {Kolokoltsov}, M.~{Troeva}, and W.~{Yang}.
\newblock {On the rate of convergence for the mean-field approximation of
  controlled diffusions with large number of players}.
\newblock {\em Dyn. Games Appl.}, 4(2), 2014.

\bibitem{Vinter_Kwong:1981}
Richard~B. Vinter and Raymond~H. Kwong.
\newblock The infinite time quadratic control problem for linear systems with
  state and control delays: An evolution equation approach.
\newblock {\em SIAM Journal on Control and Optimization}, 19(1):139--153, 1981.

\end{thebibliography}

\end{document}